\theoremstyle{plain}
\newtheorem{thm}{Theorem}
\newtheorem{prop}[thm]{Proposition}
\newtheorem{cor}[thm]{Corollary}
\newtheorem{lemma}[thm]{Lemma}
\newtheorem{defin}[thm]{Definition}
\newtheorem{rmk}[thm]{Remark}
\renewcommand{\epsilon}{\varepsilon}
\DeclareMathOperator{\supp}{supp}
\newcommand{\dist}{\mathrm{dist}}
\newcommand{\abs}[2][{}]{\lvert{#2}\rvert_{#1}}    % abs value
\newcommand{\normsymb}{\|}
\newcommand{\norm}[2]{\normsymb{#1}\normsymb_{#2}}  % norm
\newcommand{\RR}{\mathbb{R}} % symbol for real numbers
\newcommand{\CC}{\mathbb{C}} % symbol for complex numbers
\newcommand{\NN}{\mathbb{N}} % symbol for natural numbers
\newcommand{\ZZ}{\mathbb{Z}} % symbol for integers
\newcommand{\TT}{\mathbb{T}} % symbol for torus
\DeclareMathOperator{\dd}{d\!}  % for d in dx etc.
\title[]{On null-controllability of the heat equation on infinite strips and control cost estimate}
\author[]{Michela Egidi}
\address{Technische Universit\"at Dortmund, Fakult\"at f\"ur Mathematik, 44221 Dortmund, Germany\\
}
\email{michela.egidi@mathematik.tu-dortmund.de}
\subjclass{35Q93, 93Bxx, 35K05}
\keywords{heat equation, infinite strip, null-controllability, observability, spectral inequality, thick set}
\begin{document}

\maketitle

\begin{abstract}
We consider an infinite strip $\Omega_L=(0,2\pi L)^{d-1}\times\RR$, $d\geq 2$, $L>0$, 
and study the control problem of the heat equation on $\Omega_L$ with Dirichlet or Neumann boundary conditions, 
and control set $\omega\subset\Omega_L$. 
We provide a sufficient and necessary condition for null-controllability in any positive time $T>0$, 
which is a geometric condition on the control set $\omega$. 
This is referred to as ``thickness with respect to $\Omega_L$'' and implies that the set $\omega$ 
cannot be concentrated in a particular region of $\Omega_L$. 
We compare the thickness condition with a previously known necessity condition for null-controllability and 
give a control cost estimate which only shows dependence on the geometric parameters of $\omega$ and the time $T$.  
\end{abstract}

\section{Introduction and main results}\label{intro}

Let $L>0$ and let $T>0$ be a fixed positive time. Let $\TT^{d-1}_L:=(0,2\pi L)^{d-1}$ be an open $(d-1)$-dimensional cube with sides of length $2\pi L$ 
and let $\Omega_L:=\TT^{d-1}_L\times\RR$ be an infinite strip in $\RR^d$. 

For any given subset $\omega\subset\Omega_L$, we consider the controlled heat equation on $\Omega_L$ with \emph{control set} $\omega$, i.e. the system 

\begin{equation}\label{eq:heat_equation}
\left\{\begin{array}{ll}
\partial_t u(t,x) -\Delta u(t,x) = \chi_{\omega}(x)v(t,x) & \qquad\text{ on } (0,T)\times\Omega_L\\
u(0,\cdot)=u_0(\cdot)\in L^2(\Omega_L) & \qquad\text{ in } \Omega_L,
\end{array}
\right.
\end{equation}
where $\Delta$ denotes the Laplacian on $\Omega_L$ with either Dirichlet or Neumann boundary conditions, and $\chi_{\omega}$ is the characteristic function of $\omega$. 
The function $v\in L^2((0,T)\times \Omega_L)$ is called \emph{control function}.

System \eqref{eq:heat_equation} is said to be \emph{null-controllable} in time $T>0$ if for every initial data $u_0\in L^2(\Omega_L)$ 
there exists a control function $v\in L^2((0,T)\times\Omega_L)$ such that the solution of \eqref{eq:heat_equation} satisfies $u(T,\cdot)=0$.

In case null-controllability holds in time $T>0$, for all initial data $u_0$ the set 
\[
\mathcal{C}_{u_0,T}=\{v\in L^2((0,T)\times\Omega_L)\;\vert\; \text{ the solution } u \text{ of \eqref{eq:heat_equation} satisfies } u(T,\cdot)=0\} 
\]
is not empty and the quantity
\begin{equation}\label{eq:control_cost}
C_T:=\sup_{\norm{u_0}{L^2(\Omega_L)}=1}\inf_{v\in \mathcal{C}_{u_0,T}}\norm{v}{L^2((0,T)\times \omega)}
\end{equation}
is called \emph{control cost}.

It is well known that the heat equation on bounded domains $\Omega$ with open control set $\omega\subset \Omega$ is null-controllable in any time $T>0$, 
see for example \cite{lebeau-robbiano-95}. 
It has also been recently shown in \cite{ApraizEWZ-14} and \cite{EscauriazaMZ-15} that if $\Omega$ is bounded and $\omega$ is a measurable subset of non-zero measure, 
null-controllability still holds.

For unbounded domains the situation is different. For the heat equation on $\RR^d$, $d\geq 1$, a sharp necessary and sufficient condition for null-controllability has been
recently established in \cite{EV-17} and \cite{WWZZ:17} independently. This condition is referred to as $(\gamma, a)$-\emph{thickness} and means that the set is somehow well-distributed in $\RR^d$ (see Definition \ref{def:thickness}~(i) below).

%This condition reads as follow: 
%
%\begin{defin}
%    A measurable set $S\subset\RR^d$ is \emph{thick} if there exist $\gamma\in(0,1]$ and $a\in (\RR+)^d$ such that 
%    for all $P\subset\RR^d$ hyperrectangles with sides parallel to coordinate axes and of length $a_1,\ldots, a_d$ we have
%    \begin{equation}\label{eq:thick}
%    \abs{S\cap P} \geq \gamma \abs{P}. 
%    \end{equation}
%    Here $\abs{\cdot}$ stands for the Lebesgue measure in $\RR^d$. 
%
%    We say that $S$ is $(\gamma,a)$\emph{-thick} if $S$ is thick for some parameters $\gamma$ and $a$. 
%\end{defin} 

More generally, for arbitrary unbounded Euclidean domains $\Omega$ a necessary condition for null-controllability connected to the heat kernel of the
Laplacian on $\Omega$ with Dirichlet boundary conditions
has been identified in \cite[Theorem 1.11]{miller:05}. 
Precisely, let us consider system \eqref{eq:heat_equation} on a given unbounded Euclidean domain $\Omega$, instead of on $\Omega_L$, 
with control set $\omega\subset\Omega$. 
If there exist a sequence of points $(y_n)_{n\in\NN}$ in $\Omega$, a time $\bar{T}>0$ and a constant $\kappa>1$ such that 
\begin{equation}\label{eq:necessary-miller}
-2T\log\left(\int_\omega \exp\left( -\frac{\norm{x-y_n}{2}^2}{2T}\right) \dd x \right) 
      -\kappa \frac{\pi^2 d^2}{4}\left( \frac{T}{d_b(y_n,\partial\Omega)} \right)^2 \underset{n\to\infty}{\longrightarrow} + \infty,
\end{equation}
where $d_b(y_n,\partial\Omega)=\min \left( \dist(y_n, \partial\Omega), \frac{T\pi^2 d}{4}\right)$, 
then the controlled heat equation on $\Omega$ is not null-controllable in any time $T< \bar{T}$. 
Here $\dist$ denoted the distance function on $\Omega$.

In particular, the author establishes the failure of null-controllability if the control set $\omega$ has finite Lebesgue measure. 

Motivated by the recent work \cite{EV-17} and \cite{WWZZ:17}, we show that a local notion of $(\gamma, a)$-thickness is a sufficient 
and necessary condition for null-controllability of system \eqref{eq:heat_equation}.
% while a necessary condition is given by a \emph{local} notion of $(\gamma, a)$-thickness. 

Let us now introduce the geometric definitions needed to state our main theorem. 
In what follow, $\abs{\cdot}$ denotes the Lebesgue measure. 

\begin{defin}\label{def:thickness}
\begin{itemize}
\item[(i)] A measurable set $S\subset\RR^d$ with positive measure is called \emph{thick} if there exist 
$\gamma\in(0,1]$ and $a\in (\RR_+)^d$ such that for all $P\subset\RR^d$ hyperrectangles with sides 
parallel to coordinate axes and of length $a_1,\ldots, a_d$ we have
    \begin{equation}\label{eq:thick}
    \abs{S\cap P} \geq \gamma \abs{P}. 
    \end{equation}
\item[(ii)] A measurable set $S\subset\Omega_L$ with positive measure is called \emph{thick with respect to} $\Omega_L$ 
if there exist $\gamma\in(0,1]$ and $a\in (\RR_+)^d$ such that for all $P\subset\Omega_L$ hyperrectangles with sides 
parallel to coordinate axes and of length $a_1,\ldots, a_d$ we have
    \begin{equation}\label{eq:thick-wrt}
    \abs{S\cap P} \geq \gamma \abs{P}.
    \end{equation}
   \end{itemize}
 To emphasise the parameter we also refer to $S$ as $(\gamma, a)$-thick or $(\gamma, a)$-thick with respect to $\Omega_L$. 
\end{defin}

We point out that $(\gamma,a)$-thickness with respect to $\Omega_L$ implies that $a_j\leq 2\pi L$ for all $j=1,\ldots d-1$. 

Examples of thick sets with respect to $\Omega_L$ are periodic arrangements of balls inside the strip, 
sets of type $M\times \RR$ with section $M\subset \TT^{d-1}_L$ being a non-empty measurable subset of positive measure,
and $(\gamma, a)$-thick sets $S\subset \RR^d$ with $a_j\leq 2\pi L$ for $j=1,\ldots, d$. 

We observe that it is always possible to obtain a thick set starting from a set thick with respect to $\Omega_L$.

\begin{lemma}\label{lemma:thickness}
Let $S\subset\Omega_L$ be a $(\gamma, a)$-thick set with respect to $\Omega_L$, then the set $\tilde{S}=S\cup (\RR^d\setminus \Omega_L)$
is $(\gamma/2^d, 2a)$-thick in $\RR^d$. 
\end{lemma}

\begin{proof}
Let $S,\tilde S$ be as in the statement of the lemma.
Let $P$ be a hyperrectangle in $\RR^d$ with sides parallel to coordinate axes and of length $2a_1, \ldots, 2a_d$. 
Then $P$ always contains a hyperrectangle $Q$ with sides parallel to coordinate axes and of length $a_1,\ldots, a_d$.
Moreover, we have $\abs{Q}=1/2^d\abs{P}$. 
 
Case 1: $P\subset\RR^d\setminus\Omega_L$. 
Then $\abs{P\cap \tilde{S}}=\abs{P}\geq (\gamma/ 2^d)\abs{P}$ since $\gamma/ 2^{d}\leq 1$. 

Case 2: $P\subset \Omega_L$. %$P$ contains a hyperrectangle $Q$ with sides of length $a_1,\ldots, a_d$ and
Let $Q\subset P$ as above, then $\abs{P\cap \tilde{S}}\geq \abs{Q\cap S}\geq \gamma\abs{Q}=(\gamma/ 2^{d})\abs{P}$, 
since $S$ is $(\gamma, a)$-thick with respect to $\Omega_L$.

Case 3: $P\cap \partial\Omega_L\neq \emptyset$.  
Then, the hyperrectangle $Q$ is either contained in $\Omega_L$ or in $\RR^d\setminus\Omega_L$. 
If $Q\subset \Omega_L$, we proceed as in Case 1. 
%then we again have $\abs{P\cap \tilde{S}}\geq \abs{Q\cap S}\geq \gamma\abs{Q}=(\gamma/ 2^{d})\abs{P}$. 
If $Q\subset (\RR^d\setminus\Omega_L)$, we proceed as in Case 2. 
In both cases we then obtained $\abs{P\cap \tilde{S}}\geq (\gamma/ 2^d)\abs{P}$. 

% then $\abs{P\cap \tilde{S}}\geq \abs{Q}\geq (\gamma/ 2^{d})\abs{P}$. 
Therefore the set $\tilde S$ is $(\gamma/2^d, 2a)$-thick, as claimed.
\end{proof}

\begin{thm}\label{thm:main}
Let $T>0$ and consider system \eqref{eq:heat_equation} with control set $\omega\subset\Omega_L$. 
The following statements are equivalent: 
\begin{itemize}
\item [(i)] $\omega$ is thick with respect to $\Omega_L$,
\item [(ii)] system \eqref{eq:heat_equation} is null-controllable in any time $T>0$. 
\end{itemize}
Moreover, if $\omega$ is a $(\gamma,a)$-thick set with respect to $\Omega_L$, the control cost satisfies
    \begin{equation} \label{eq:control-cost}
    C_T\leq \left(\frac{(4K)^d}{\gamma}\right)^{12\sqrt{2}K(2\norm{a}{1}+d)} \exp\left(\frac{(48 K)^2(2\norm{a}{1}+d)^2\log^2((4K)^d/\gamma)}{2T}\right),
    \end{equation}
    where $\norm{a}{1}= \sum_{j=1}^d a_j$, and $K>0$ is a universal constant.
\end{thm}

The proof of the necessity condition in Theorem \ref{thm:main} is inspired by \cite{miller:05} and build upon heat kernel estimates. 
Indeed, condition \eqref{eq:necessary-miller} is equivalent to $\omega$ not being thick with respect to $\Omega_L$,  
as Lemma \ref{lemma:equivalence} in Section \ref{sec:necessity} shows. 
%Moreover, its proof can be easily modified to show 
%that given a measurable subset $\omega\subset \Omega$, $\Omega$ being any unbounded Euclidean domain, this set
%is not thick with respect to $\Omega$ if and only if it satisfies \eqref{eq:necessary-miller}.
%Therefore, the condition "thickness with respect to $\Omega$" is a necessary condition for null-controllability of the heat equation 
%on general Euclidean unbounded domains, assuming Dirichlet boundary conditions if $\partial\Omega\neq\emptyset$.

The sufficiency condition of Theorem \ref{thm:main} and the control cost estimate are a consequence of the
following proposition and Lemma \ref{lemma:thickness}.
% using $S=\omega$ 
% thick with respect to $\Omega_L$.

\begin{prop}\label{prop:sufficiency}
    Let $S\subset\RR^d$ be a $(\gamma, a)$-thick set with $a_j\leq 2\pi L$ for $j\in\{1,\ldots,d-1\}$, 
    and let $\omega=S\cap \Omega_L\subset \Omega_L$. 
    Then, system \eqref{eq:heat_equation} is null-controllable in any time $T>0$, and the control cost satisfies
    \begin{equation} \label{eq:control-cost}
    C_T\leq \left(\frac{(2K)^d}{\gamma}\right)^{12\sqrt{2}K(\norm{a}{1}+d)} \exp\left(\frac{(48 K)^2(\norm{a}{1}+d)^2\log^2((2K)^d/\gamma)}{2T}\right),
    \end{equation}
    where $\norm{a}{1}= \sum_{j=1}^d a_j$, and $K>0$ is a universal constant.
\end{prop}

Indeed, to prove the implication $(i)\Rightarrow (ii)$ of Theorem \ref{thm:main}, we consider the set $\omega$
of the statement and we construct $\tilde{\omega}=\omega\cup (\RR^d\setminus\omega)$, which by Lemma \ref{lemma:thickness}
is thick in $\RR^d$. 
Then, the above proposition applied with $\tilde{\omega}$ instead of $\tilde S$ gives us null-controllability 
of the system. The control cost estimate is then obtained pluggin into \eqref{eq:control-cost} the thickness parameters 
of $\tilde \omega$ given by Lemma \ref{lemma:thickness}.

\smallskip 

If the set $S$ respects the cartesian structure of $\RR^d=\RR^{d-1}\times\RR$, we obtain
the following corollary.
%is a $(\gamma, a)$-thick set in $\RR^{d-1}$, then $S\times\RR$ is $(\gamma, \tilde{a})$-thick in $\RR^d$ 
%for $\tilde{a}=(a_1,\ldots, a_d,\alpha)$, $\alpha>0$. Hence, a direct consequence of Theorem \ref{thm:sufficiency} is the following corollary. 

\begin{cor}\label{cor:1}
\begin{itemize}
 \item[(i)] Let $S\subset \RR^{d-1}$ be a $(\gamma, a)$-thick set with $a_j\leq 2\pi L$ for $j\in\{1,\ldots,d-1\}$. 
    Then, the controlled heat equation \eqref{eq:heat_equation} on $\Omega_L$ with control set $\omega= (S\times\RR)\cap\Omega_L$ 
    is null-controllable in any time $T>0$ with control cost
    \begin{equation*}% \label{eq:control-cost-cor}
    C_T\leq \left(\frac{(2K)^d}{\gamma}\right)^{12\sqrt{2}K(\norm{\tilde{a}}{1}+d)} \exp\left(\frac{(48 K)^2(\norm{\tilde{a}}{1}+d)^2\log^2((2K)^d/\gamma)}{2T}\right),
    \end{equation*} 
    where $\tilde{a}=(a_1,\ldots, a_{d-1},\alpha)$ for any finite number $\alpha>0$ and $K>0$ is the universal constant from Proposition \ref{prop:sufficiency}.\\
    
\item[(ii)] Let $S_1\subset\RR^{d-1}$ be a $(\gamma_1, a_1)$-thick set with $a_j\leq 2\pi L$ for $j\in\{1,\ldots,d-1\}$ 
and let $S_2\subset \RR$ be a $(\gamma_2,a_2)$-thick set.
    Then, the controlled heat equation \eqref{eq:heat_equation} on $\Omega_L$ with control set $\omega= (S_1\times S_2)\cap\Omega_L$ 
    is null-controllable in any time $T>0$ with control cost
    \begin{equation*}% \label{eq:control-cost-cor}
    C_T\leq \left(\frac{(2K)^d}{\gamma_3}\right)^{12\sqrt{2}K(\norm{a_3}{1}+d)} \exp\left(\frac{(48 K)^2(\norm{a_3}{1}+d)^2\log^2((2K)^d/\gamma_3)}{2T}\right),
    \end{equation*} 
    where $\gamma_3=\gamma_1\gamma_2$, $a_3=(a_1,a_2)\in\RR^d$, and $K>0$ is the universal constant from Proposition \ref{prop:sufficiency}.
\end{itemize}
\end{cor}

The two statement are a straighforward consequence of the facts that $S\times\RR$ is $(\gamma, \tilde a)$-thick, 
and $S_1\times S_2$ is $(\gamma_1\gamma_2, (a_1,a_2))$-thick.
%Statement $(i)$ follows from the fact that $S\times\RR$ is $(\gamma, \tilde a)$-thick and statement $(ii)$ follows 
%The second statement follows immediately from the fact that $S_1\times S_2$ is $(\gamma_1\gamma_2, (a_1,a_2))$-thick in $\RR^d$.

\begin{rmk}\label{rmk:generalization}
Let $n, m\geq1$, $R=(R_1,\ldots, R_m)\in(\RR_+)^m$, and consider the cartesian products $M\times\RR^n$ or $\RR^n\times M$, for
$M=(0,2\pi R_1)\times\ldots\times(0, 2\pi R_{m})$. 
Then, Proposition \ref{prop:sufficiency} and Theorem \ref{thm:main}, as well as Corollary \ref{cor:1}, 
have an analogous formulation for the heat equation on 
$M\times\RR^n$ or $\RR^n\times M$ controlled from an interior region $\omega$. 
\end{rmk}

Let us now briefly comment on the estimate of the control cost. 
Geometric bounds on the control cost have been previously obtained for small times in \cite{miller:04}, 
see also \cite{FCZ:00,zuazua:01}, for the heat equation on $d$-dimensional, compact, connected manifolds 
controlled from an open interior region $\omega\subset M$. 
They showed
\[
 0 < \sup_{y\in M} \dist(y, \bar{\omega})^2/4 \leq \liminf_{T\to 0} T\log C_T \leq \limsup_{T\to 0} T\log C_T < +\infty ,
\]
where $\dist$ denotes the distance function on $M$. 
In our case, i.e. for $\Omega_L$ and $\omega$ $(\gamma, a)$-thick with respect to $\Omega_L$, 
we do not achieve a lower bound, but an upper bound, namely,
\[
 \limsup_{T\to 0} T\log C_T \leq \frac{(48K)^2}{2}(2\norm{a}{1}+d)^2\log^2\left(\frac{(4K)^d}{\gamma}\right).
\]

Moreover, bounds on the control cost for a more general class of controlled systems have been investigate in \cite{miller:10}, 
where the author focuses again on small times. His result (see \cite[Theorem 2.2]{miller:10}) applies to our setting and 
gives the control cost 
\[
C_T\leq 4\left(\frac{(4K)^d}{\gamma}\right)^{6d-1}\exp\Big(\frac{c}{T}\Big)
\] 
valid for all times $T\in(0,T_0)$, where $c$ and $T_0$ implicitly depend on the model parameters, 
and $K>0$ is the universal constant from Proposition \ref{prop:sufficiency}.
Comparing this estimate with the bound in \eqref{eq:control-cost}, we see that we have gained an estimate for small and large times, 
which is explicit in the dependence on $\omega$ and $T$.  
This is due to the use of the observability result in \cite{B-PS:17}, see also Section \ref{sec:observability}.  

We also remark that the estimate in \eqref{eq:control-cost}, as well as the bound on the limit superior, is independent of the scale $L$.

\subsection*{Open questions and further investigations}
The sufficiency results here presented open questions in two directions. 
Since the infinite strip is an unbounded flat domain, it is natural to ask about other unbounded domains. 
For Euclidean domains which can be exhausted by rectangles, i.e. half-spaces and orthants, 
and for cones with angle $\pi/2^n$, $n\geq2$, null-controllability of the heat equation with control region 
given by the intersection of a thick set and the domain, 
and corresponding control cost estimates have been established in \cite[Section 2]{egidi-seelmann-19}.
More generally, the authors show that if the heat equation on a domain in $\RR^d$ symmetric with respect to a hyperplane 
is null-controllable, then so is the heat equation on the symmetric parts. 
Different is the situation for unbounded domains in $\RR^d$ with curved boundary, for example paraboloids. 
Indeed, in this case, null-controllability of the controlled heat equation is an open problem for any kind 
of control set. To the best of our knowledge, not even a spectral inequality for functions in the range 
of the spectral projector of the associated Laplace operator is known. 

The other natural question is on the type of the operator considered.
Namely, one can study system \eqref{eq:heat_equation} with a second order elliptic operator of the form 
$L u= \sum_{i,j=1}^{d}\partial_i a_{ij}\partial_j u + \sum_{j=1}^{d} b_j \partial_j u +c u$, 
where $a_{ij},b_j,c$ are measurable functions, instead of the Laplacian. 

The null-controllability of a heat-like system on the whole space with operator $L$ having constant and uniformly elliptic 
coefficients $a_{i,j}$ and $b_j=c=0$ for all $j=1,\ldots, d$ has been recently proved in \cite{gallaun-seifert-taut-19}, 
through an abstract observability result similar to the ones in \cite{TT:11, B-PS:17}, but allowing for 
bounded operators instead of projections and assuming a spectral inequality and a dissipation estimate 
valid for large enough energy value instead of for all energy values.
This last feature is of particular use when dealing with an operator $L$ having constant and uniformly elliptic 
coefficients $a_{i,j}$ and non-zero constant coefficients $b_j$ and $c$, as it allows to easily modify the 
proof of \cite[Theorem 3.1]{gallaun-seifert-taut-19} to include this case \cite{gst-private}.

Under a deeper study of the properties of the operator $f\mapsto h_f$ (see beginning of Section \ref{sec:spectal-inequality}),
it is possible to combine the spectral inequality of Theorem \ref{thm:log-ser-strip}
with techniques developed in \cite{gallaun-seifert-taut-19} (see Theorem 2.1, the proof Theorem 3.3, and Corollary 4.6)
to conclude null-controllability of the heat-like system with elliptic operator $L$ in any time $T>0$ 
and obtain a control cost estimate explicitely depending on the model parameters.

%If the coefficients $a_{i,j}$ are constant and uniformly elliptic and $b_j=c=0$ for all $j=1,\ldots, d$, 
%one could combine the spectral inequality of Theorem \ref{thm:log-ser-strip}
%with techniques developed in \cite{gallaun-seifert-taut-19} (see Theorem 2.1, the proof Theorem 3.3, and Corollary 4.6)
%to conclude null-controllability of the corresponding system in any time $T>0$ and obtain a control cost estimate
%explicitely depending on the model parameters. 
%We point out that Theorem 2.1 in \cite{gallaun-seifert-taut-19} is stronger than the abstract result in \cite{B-PS:17}, 
%as it uses operators which are not necessarily projections and allows for a spectral estimate and a
%dissipation estimate only valid for large enough energy values. 
%This last feature is of particular use when dealing with an operator $L$ having constant and uniformly elliptic 
%coefficients $a_{i,j}$ and non-zero constant coefficients $b_j$ and $c$, as it allows to easily modify the 
%proof of \cite[Theorem 3.1]{gallaun-seifert-taut-19} to include this case \cite{gst-private}. 

%We also mention that a spectral inequality for functions in the range of the spectral 
%projectors of the operator $L$ for any energy interval is not known. 
%However, partial results in this direction can be found in \cite{BTV-17, TV-preprint}, 
%where a spectral inequality for functions in the range of the spectral projections
%for small energy intervals and a control region which is thick and equidistributed has been proved using Carleman estimates. 

Another question emerging from this work is whether the notion "thickness with respect to" is a necessary condition 
for controlled heat-like systems on a general unbounded Euclidean domain. 
Here we can give a positive answer as long as the corresponding operator
admits the existence of a heat kernel enjoying lower and upper gaussian bounds. 
For such existence criteria we refer the reader, for example, to \cite{davies:89, arendt-terElst-97, ouhabaz-04} 
and the references therein.
In this case, an adaptation of the proof in Section \ref{sec:necessity} gives the claim, under an appropriate 
modification of Definition \ref{def:thickness} (ii). 
Moreover, it is also possible to modify the proof of Lemma \ref{lemma:equivalence} accordingly, to show that 
condition \ref{eq:necessary-miller} is equivalent to "not being thick with respect to".

\subsection*{Organization of the paper}
The rest of the paper is organised as follow. In Section \ref{sec:observability} we discuss an observability result by K. Beauchard and K. Pravda-Starov 
\cite{B-PS:17} on which the proof of Proposition \ref{prop:sufficiency} is based. 
In Section \ref{sec:spectal-inequality} we derive a spectral inequality for a sub-class of $L^2(\Omega_L)$-functions. 
In Section \ref{sec:sufficiency} we prove Proposition \ref{prop:sufficiency}.
Finally in Section \ref{sec:necessity} we prove the implication $(ii)\Rightarrow(i)$ of
Theorem \ref{thm:main}, % \ref{thm:sufficiency} and \ref{thm:necessity}, respectively, 
and we compare the notion of thickness with respect to $\Omega_L$ to the necessary condition \eqref{eq:necessary-miller}.

\subsection*{Acknowledgment} This work has been partially supported by the DFG Grant
Ve 253/7-1 ``Multiscale Version of the Logvinenko-Sereda Theorem''. The author would like to thank Ivan Veseli\'c for suggesting the topic and Albrecht Seelmann for comments
on a first draft and stimulating discussions. 

\section{An abstract observability result}\label{sec:observability}

The Hilbert Uniqueness Method, see for example \cite[Theorem 2.44]{coron:07}, establishes that 
null-controllability of system \eqref{eq:heat_equation} in time $T>0$ is equivalent to the following observability estimate with respect to $\omega$: 
\begin{equation}\label{eq:observability-def}
  \exists C>0:\;\forall\; g_0\in L^2(\Omega_L),\quad \norm{g(T,\cdot)}{L^2(\Omega_L)}^2\leq C\int_0^T \norm{g(t,\cdot)}{L^2(\omega)}^2\dd t,
\end{equation}
where $g$ is the solution of the adjoint system 
\begin{equation}\label{eq:heat_equation-adjoint}
  \left\{\begin{array}{ll}
  \partial_t g(t,x) - \Delta g(t,x) = 0 & \qquad\text{ on } (0,T)\times\Omega_L\\
  g(0,\cdot)=g_0(\cdot)\in L^2(\Omega_L) & \qquad\text{ in }\Omega_L.
  \end{array}\right.
\end{equation}
In addition, it provides an estimate for the control cost. In fact, 
\[
C_T\leq \sqrt{C}, 
\]
where $C$ is the observability constant in \eqref{eq:observability-def}.

Therefore, to show null-controllability, we use an abstract observability result obtained in \cite[Theorem 2.1]{B-PS:17}, based on the 
Lebeau-Robbiano strategy, see \cite{lebeau-robbiano-95}, which for self-adjoint operators is just the Lebeau-Robbiano strategy
adapted to unbounded domains. 
Such observability result can also be found in \cite{miller:10} (Theorem 2.2), which however holds only for small time intervals.  
The observability result below holds under the assumption of a dissipation inequality and a spectral inequality for orthogonal 
projections on $L^2(\Omega)$, which are not necessarily related to the Laplace operator. 
If these orthogonal projections are chosen as the spectral projectors of the operator under 
consideration, the dissipation inequality holds trivially (by functional calculus) and a first concrete instance 
of such a spectral inequality involving open sets with an additional property is given in \cite{lerousseau-mojano-16}.

%The work \cite{B-PS:17} is then an improvement of the ideas used in \cite{miller:10} as it gives observability for all times $T$ and 
%a better control on the observability constant.  
%
\begin{thm}\label{thm:observability-B-PS}
      Let $\Omega$ be an open subset of $\RR^d$, $\omega$ be a measurable subset of $\Omega$ with positive measure, 
      $(\pi_k)_{k\in\NN}$ be a family of orthogonal projections on $L^2(\Omega)$, 
      $(e^{t\Delta})_{t\geq 0}$ be the contraction semigroup associated to the Laplacian on $L^2(\Omega)$, and let $c_1,c_2,\eta_1,\eta_2,t_0,m>0$  
      be positive constants with $\eta_1 < \eta_2$. If the spectral inequality
      \begin{equation}\label{eq:spectral-inequality-general}
      \forall \ g\in L^2(\Omega), \ \ \forall \ k\in\NN,\qquad \norm{\pi_k g}{L^2(\Omega)}\leq e^{c_1 k^{\eta_1}}\norm{\pi_k g}{L^2(\omega)},
      \end{equation}
      and the dissipation estimate
      \begin{equation}\label{eq:dissipation-general}
      \forall \ g\in L^2(\Omega), \ \forall \ k\in\NN, \forall \ 0< t<t_0, \quad \norm{(1-\pi_k)(e^{t\Delta}g)}{L^2(\Omega)}\leq \frac{e^{-c_2 t^m k^{\eta_2}}}{c_2}\norm{g}{L^2(\Omega)}
      \end{equation}
      hold, then there exist two positive constants $C_1, C_2>0$ such that the following observability estimate holds
      \begin{equation}\label{eq:observability-general}
      \forall \ T>0, \ \forall\  g\in L^2(\Omega), \quad \norm{e^{T\Delta}g}{L^2(\Omega)}^2\leq C_1\exp\left(\frac{C_2}{T^{\frac{\eta_1 m}{\eta_2-\eta_1}}}\right)\int_0^T \norm{e^{t\Delta}g}{L^2(\omega)}^2 \dd t.
      \end{equation}
\end{thm}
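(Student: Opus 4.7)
The proof follows the Lebeau--Robbiano telescoping strategy, combining the low-frequency spectral inequality \eqref{eq:spectral-inequality-general} with the high-frequency dissipation estimate \eqref{eq:dissipation-general} on a carefully chosen partition of $(0,T]$. Write $u(t) := e^{t\Delta}g$ for the solution of the adjoint semigroup, so that the goal is to control $\|u(T)\|_{L^2(\Omega)}^2$ by an integral of $\|u(t)\|_{L^2(\omega)}^2$.

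First, I would derive a one-step recursive estimate. Fix a cutoff $k$ and two times $a < b$ in $(0,T]$. Split
\[
\|u(b)\|^2_{L^2(\Omega)} \leq 2\|\pi_k u(b)\|^2_{L^2(\Omega)} + 2\|(1-\pi_k)u(b)\|^2_{L^2(\Omega)}.
\]
The second term is handled by \eqref{eq:dissipation-general} applied on the interval $[a,b]$, viewing $u(b)=e^{(b-a)\Delta}u(a)$, giving decay by $e^{-2c_2(b-a)^m k^{\eta_2}}/c_2^2$. For the first term, use \eqref{eq:spectral-inequality-general} at each intermediate time $t\in[a,b]$ (together with the contractivity/commutation of the semigroup with $\pi_k$ on the low-frequency component) and average:
\[
\|\pi_k u(b)\|^2_{L^2(\Omega)} \leq \frac{e^{2c_1 k^{\eta_1}}}{b-a}\int_a^b \|\pi_k u(t)\|^2_{L^2(\omega)}\dd t.
\]
Then reduce the projected observation to the full observation by writing $\|\pi_k u(t)\|_{L^2(\omega)} \leq \|u(t)\|_{L^2(\omega)} + \|(1-\pi_k) u(t)\|_{L^2(\Omega)}$ and absorbing the second piece via \eqref{eq:dissipation-general}. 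This produces an inequality of the form
\[
\|u(b)\|^2_{L^2(\Omega)} \leq A(k,a,b)\int_a^b \|u(t)\|^2_{L^2(\omega)}\dd t + B(k,a,b)\|u(a)\|^2_{L^2(\Omega)},
\]
with $A\sim e^{2c_1 k^{\eta_1}}/(b-a)$ and $B\sim e^{-2c_2(b-a)^m k^{\eta_2}}$.

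Next, I would iterate. Choose a dyadic sequence $T_0=T$, $T_j-T_{j+1}\sim T/2^j$, and balance the two exponentials by setting
\[
k_j^{\eta_2-\eta_1}\sim (T_j-T_{j+1})^{-m}, \qquad \text{i.e.} \qquad k_j \sim (T/2^j)^{-m/(\eta_2-\eta_1)}.
\]
Since $\eta_1<\eta_2$, this choice makes the dissipation factor $B_j$ decay doubly-exponentially in $j$ while the spectral factor $A_j$ grows only exponentially in $j^{\eta_1 m/(\eta_2-\eta_1)}$ (times $T^{-\eta_1 m/(\eta_2-\eta_1)}$). Applying the one-step inequality on each sub-interval $[T_{j+1},T_j]$ and telescoping $\|u(T_0)\|^2\to\|u(T_j)\|^2\to\dots$, the $B_j$ factors collapse into a finite product, the $A_j$ merge the observation pieces into a single integral over $(0,T)$, and the worst exponent is dictated by $k_0^{\eta_1}\sim T^{-\eta_1 m/(\eta_2-\eta_1)}$, which yields exactly the factor $\exp(C_2/T^{\eta_1 m/(\eta_2-\eta_1)})$ of \eqref{eq:observability-general}.

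The main difficulty is the bookkeeping in this balancing: one must verify that the doubly-exponentially decaying $B_j$ truly dominate the ever-growing $A_j$ when summed, so that the telescoped remainder $\|u(T_j)\|^2\prod_{i<j}B_i$ vanishes as $j\to\infty$, and extract explicit constants $C_1,C_2$ depending only on $c_1,c_2,\eta_1,\eta_2,m,t_0$. A secondary technical point is the cross-contamination in the spectral step: the passage from $\|\pi_k u(t)\|_{L^2(\omega)}$ to $\|u(t)\|_{L^2(\omega)}$ introduces an extra high-frequency error, and one needs to check that it is consistent with the already-chosen scaling and does not spoil convergence. Once these are settled, the conclusion follows by passing to the limit $j\to\infty$.
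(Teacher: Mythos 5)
Note first that the paper does not prove this theorem; it is a quotation of \cite[Theorem 2.1]{B-PS:17} (with $\omega$ relaxed from open to measurable and the single constant there split into $C_1$ and $C_2$), and the explicit values of $C_1, C_2$ used later in Section~\ref{sec:sufficiency} are imported from the proof in \cite[Appendix~8.3]{B-PS:17}. Your sketch correctly identifies the Lebeau--Robbiano telescoping strategy that underlies that proof, but there is a real gap in the convergence argument and one step that relies on a hypothesis the theorem does not make.

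The gap is in the claim that with the choice $k_j \sim (T/2^j)^{-m/(\eta_2-\eta_1)}$ ``the spectral factor $A_j$ grows only exponentially in $j^{\eta_1 m/(\eta_2-\eta_1)}$'' while $B_j$ decays doubly exponentially. In fact $k_j^{\eta_1} \sim T^{-\epsilon}\, 2^{j\epsilon}$ with $\epsilon = \eta_1 m/(\eta_2-\eta_1)$, so $A_j \sim \exp(2c_1 T^{-\epsilon} 2^{j\epsilon})$ is itself \emph{doubly} exponential in $j$, exactly the same order as $1/B_j$. Under exact balance, boundedness of $\prod_{i<j} B_i \cdot A_j$ would hinge on a comparison of $c_1$ against $c_2$ that the hypotheses do not give you, so the telescoping does not automatically converge. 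The standard repair is to overshoot: take $k_j = \rho\,(T/2^j)^{-m/(\eta_2-\eta_1)}$ with a free prefactor $\rho$; the dissipation gain then scales like $\rho^{\eta_2}$ while the spectral cost scales like $\rho^{\eta_1}$, and since $\eta_1 < \eta_2$, choosing $\rho$ large enough makes the dissipation dominate. This still yields $k_0^{\eta_1} \sim \rho^{\eta_1} T^{-\epsilon}$ and hence the claimed $\exp(C_2/T^{\eta_1 m/(\eta_2-\eta_1)})$, but $\rho$ --- and therefore $C_1, C_2$ --- now depends on $c_1, c_2, \eta_1, \eta_2, m$, and tracking that dependence is precisely what the paper needs from the B-PS appendix for its control-cost estimate. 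A secondary point: your averaging step for $\|\pi_k u(b)\|^2$ invokes ``commutation of the semigroup with $\pi_k$,'' but the hypotheses only say $(\pi_k)$ are orthogonal projections, with no commutation assumption; so at the abstract level the passage from $\pi_k u(b)$ to $\pi_k u(t)$, $t \in [a,b]$, must be controlled via the dissipation estimate rather than via commutation (in the paper's application $\pi_k$ is a spectral projection of the self-adjoint generator, so commutation is free there, but the theorem is stated more generally).
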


We point out that the original statement is formulated for an open set $\omega$ and was presented with a unified constant $C=\sup(C_1, C_2)$. 
However the statement is still valid when the assumption on $\omega$ is relaxed to measurability. Distinguishing the two constants 
allows for a more precise behaviour of the control cost in terms of the geometric parameters.

\section{Spectral inequality on infinite strips}\label{sec:spectal-inequality}

Let $R=(R_1, \ldots, R_{d-1})\in (\RR_+)^{d-1}$ and consider $\Omega_R=\TT^{d-1}_R\times\RR$, 
where $\TT^{d-1}_R:=(0,2\pi R_1)\times\ldots\times(0,2\pi R_{d-1})$. With abuse of notation, we write  
$(\frac{1}{R}\ZZ)^{d-1}:=\frac{1}{R_1}\ZZ\times\ldots\times\frac{1}{R_{d-1}}\ZZ$ and 
$\frac{k}{R}:=(\frac{k_1}{R_1},\ldots, \frac{k_{d-1}}{R_{d-1}})$ for $k\in\ZZ^{d-1}$. 

By Fourier Analysis, any function $f\in L^2(\Omega_R)$ can be represented as 
\[
f(x_1,x_2)=\sum_{\frac{k}{R}\in\left(\frac{1}{R}\ZZ\right)^{d-1}}
\left(\int_{\RR}  h_f\left(\frac{k}{R},\xi\right) e^{i x_2 \xi}\dd\xi\right)e^{i\frac{k}{R}\cdot x_1},\quad (x_1,x_2)\in \TT^{d-1}_R\times\RR
\]
where 
\[
h_f:\left(\frac{1}{R}\ZZ\right)^{d-1}\times\RR \rightarrow \CC,\quad
h_f\left(\frac{k}{R},\xi\right) = \frac{1}{\sqrt{2\pi}\abs{\TT^{d-1}_R}}\int_{\TT^{d-1}_R}\int_\RR f(s,t) e^{-i \xi t} e^{-i \frac{k}{R}\cdot s} \dd t \dd s,
\]
and $\frac{k}{R}\cdot s$ stands for the Euclidean inner product in $\RR^{d-1}$.

In this section we consider functions $f\in L^2(\Omega_R)$ with $\supp h_f\subset J_1\times J_2$, 
where $J_1\subset\RR^{d-1}$ is a $(d-1)$-dimensional hyperrectangle with sides of length $b_1,\ldots, b_{d-1}$ 
and parallel to coordinate axes, and $J_2\subset\RR$ is an interval of length $b_d$. 
We assume both $J_1$ and $J_2$ to be centred at zero.
These functions have then the following representation
\begin{equation}\label{eq:function}
f(x_1,x_2)=\sum_{\frac{k}{R}\in\left(\frac{1}{R}\ZZ\right)^{d-1}\cap J_1}
\left(\int_{J_2}  h_f\left(\frac{k}{R},\xi\right) e^{i x_2 \xi}\dd\xi\right)e^{i\frac{k}{R}\cdot x_1}.
\end{equation}
Since the Fourier frequencies of $f(x_1, \cdot)$ are all contained in a compact set and the Fourier Transform of $f(\cdot, x_2)$ is compactly supported, 
the two functions $f(\cdot, x_2)$ and $f(x_1,\cdot)$ are analytic, and so is $f$ by Hartogs's Theorem, see \cite[Theorem 1.2.5]{kratz:01}.

For this class of functions, a Logvinenko-Sereda-type Theorem holds and its proof is an adaptation of the arguments used in 
\cite{Kovrijkine-thesis, Kovrijkine-01, EgidiV-16}. However, for the reader's convenience, we repeat the proof here.

\begin{thm}\label{thm:log-ser-strip}
Let $R\in(\RR_+)^{d-1}$ and $f\in L^2(\Omega_R)$ with $\supp h_f\subset J_1\times J_2$ for $J_1, J_2$ as above. 
Set $b=(b_1,\ldots, b_d)$. 
Let $S\subset\RR^d$ be a $(\gamma, a)$-thick set with $a_j\leq 2\pi R_j$ for $j\in\{1,\ldots, d-1\}$. Then, 
\begin{equation} 
\norm{f}{L^2(\Omega_R)}\leq \left(\frac{K^d}{\gamma}\right)^{K a\cdot b+\frac{6d-1}{2}}\norm{f}{L^2(S\cap\Omega_R)},
\end{equation}
where $a\cdot b$ stands for the euclidean inner product in $\RR^d$ and $K>0$ is a universal constant.
\end{thm}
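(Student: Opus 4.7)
The plan is to follow the Kovrijkine strategy for the Logvinenko-Sereda inequality, in the form used in \cite{Kovrijkine-01,EgidiV-16} for functions with compactly supported Fourier transform, adapting it to functions on $\Omega_R$ whose Fourier data $h_f$ live on the mixed discrete-continuous set $(\frac{1}{R}\ZZ)^{d-1}\times\RR$. First I would tile $\Omega_R$ by pairwise disjoint parallelepipeds $\{P_\alpha\}$ with sides of length $a_1,\dots,a_d$ parallel to the coordinate axes. The hypothesis $a_j\leq 2\pi R_j$ for $j\leq d-1$ ensures this tiling is compatible with the periodic structure in the first $d-1$ variables, and by $(\gamma,a)$-thickness of $S$ each tile satisfies $\abs{S\cap P_\alpha}\geq \gamma \abs{P_\alpha}$.

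Next I derive a Bernstein-type inequality: since $\supp h_f\subset J_1\times J_2$ with both factors centred at zero and half-widths $b_j/2$, differentiating term by term in \eqref{eq:function} gives
\begin{equation*}
    \norm{D^\beta f}{L^2(\Omega_R)}\leq \prod_{j=1}^d (b_j/2)^{\beta_j}\,\norm{f}{L^2(\Omega_R)}
\end{equation*}
for every multi-index $\beta$. I then perform the classical good/bad decomposition: call $P_\alpha$ \emph{good} if the local Bernstein-type bound
\begin{equation*}
    \norm{D^\beta f}{L^2(P_\alpha)}^2 \leq A^{|\beta|+1} \prod_{j=1}^d (b_j/2)^{2\beta_j}\, \norm{f}{L^2(P_\alpha)}^2
\end{equation*}
holds for every $\beta$, and \emph{bad} otherwise. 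A Chebyshev-type summation over $\alpha$, combined with the global Bernstein bound above, shows that for an absolute constant $A$ depending only on $d$ the bad tiles carry at most half of $\norm{f}{L^2(\Omega_R)}^2$.

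On each good tile I would then apply a multivariate Remez-Tur\'an estimate. Using analyticity of $f$ (via Hartogs' theorem, as noted after \eqref{eq:function}), I expand $f$ in a Taylor series around a carefully chosen base point of $P_\alpha$; the local Bernstein bound controls the remainder, so $f$ is well approximated on $P_\alpha$ by a polynomial of degree $\lesssim a_j b_j$ in the $j$th coordinate. Iteratively applying the one-variable Remez inequality along each coordinate axis — at every step using Fubini to locate a hyperplane slice of $P_\alpha$ on which the remaining section of $S$ still occupies a proportion $\gtrsim \gamma$ — produces a local estimate
\begin{equation*}
    \norm{f}{L^2(P_\alpha)}\leq \left(\frac{K^d}{\gamma}\right)^{Ka\cdot b+c_d}\!\norm{f}{L^2(S\cap P_\alpha)},
\end{equation*}
with $K$ an absolute constant and $c_d$ depending only on $d$. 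Squaring, summing over good $\alpha$, using $\sum_\alpha \norm{f}{L^2(S\cap P_\alpha)}^2 = \norm{f}{L^2(S\cap\Omega_R)}^2$, and absorbing the factor of $2$ lost to the bad tiles into the constant yields the claim with exponent $Ka\cdot b+\tfrac{6d-1}{2}$.

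The main obstacle will be this last step. The care lies in tracking the interplay between the Bernstein constants $(b_j/2)^{\beta_j}$, the tile sidelengths $a_j$ and the proportion $\gamma$, so that the final exponent comes out with the correct \emph{linear} dependence $Ka\cdot b$ (rather than $|\beta|\log(1/\gamma)$ or worse) and with a purely dimensional overhead $\tfrac{6d-1}{2}$. A secondary subtlety is that in the first $d-1$ variables $f$ is a (possibly infinite) trigonometric sum over $(\frac{1}{R}\ZZ)^{d-1}\cap J_1$ rather than a band-limited function on $\RR^{d-1}$, so the Remez step must be set up locally on each $P_\alpha$, viewing $f$ there as an analytic function of real variables on a compact set, rather than globally on $\RR^d$.
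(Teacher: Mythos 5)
Your Steps 1 (tiling), the Bernstein inequality, the good/bad cube decomposition, and the final summation all match the paper's strategy (the paper first proves the case $a=(1,\dots,1)$ with unit cubes and then rescales, but this is an organizational detail). The genuine gap is in the local estimate on good tiles, where you propose to iterate a one-variable Remez inequality coordinate-by-coordinate, using Fubini at each step to pick a hyperplane slice on which $S$ keeps density $\gtrsim\gamma$.

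This iteration does not close, for two intertwined reasons. First, after applying Remez in the $x_1$-direction for a fixed $(x_2,\dots,x_d)$, you would want to iterate in $x_2$ on the resulting quantity, but $\sup_{x_1}\abs{f(x_1,\cdot)}$ is not band-limited, so there is no degree bound to feed into the next Remez step. Second, if instead you Fubini-select a single hyperplane $\{x_d=c\}$ on which $S\cap P_\alpha$ has density $\geq\gamma$, this slice has no reason to pass through the point $y_0$ where $\abs{f}$ is large, so you cannot anchor the Remez normalization $\abs{\phi(z_0)}\geq 1$; conversely, if you slice through $y_0$ you lose the density control on $S$. These two requirements --- that the one-dimensional restriction go through the maximizer of $\abs{f}$ \emph{and} carry a proportion $\gtrsim\gamma$ of $S$ --- are exactly what Kovrijkine's device resolves in a single stroke: write $\abs{S\cap\Lambda}$ in spherical coordinates centered at $y_0$ to find one direction $\eta$ such that the segment $I=\{y_0+r\eta\}\cap\Lambda$ satisfies $\abs{S\cap I}/\abs{I}\geq\abs{S\cap\Lambda}/C^d$, then apply the analytic-function lemma (Lemma~\ref{lemma:1}) to $w\mapsto f(y_0+w\abs{I}\eta)$ on a disk, followed by the level-set Lemma~\ref{lemma:level-set-argument} to pass from sup bounds to $L^2$ bounds. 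Without this single-segment reduction your scheme would revert to the pre-Kovrijkine coordinate-iteration proofs of Logvinenko--Sereda, which do not yield the linear dependence $Ka\cdot b$ in the exponent that the theorem asserts. To repair the proposal, replace the iterated Remez step by the spherical-coordinate selection of a line segment through the maximizer and the disk-analyticity lemma; everything else you wrote can then be kept essentially as is.
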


\begin{rmk}\label{rmk:constant}
 Keeping track of the universal constants in the proof of Theorem \ref{thm:log-ser-strip} it is easy to see that $K\geq e$. 
\end{rmk}

Instrumental to the proof of Theorem \ref{thm:log-ser-strip} are the following three lemmas. 
The first one is proved in \cite[Lemma 1]{Kovrijkine-01}, the second one is announced in \cite{Kovrijkine-01} 
and proved in \cite[Lemma 15]{EgidiV-16}, and the third one is a Bernstein inequality for 
$L^2$-functions on $\Omega_R$, where the fact that $J_1,J_2$ are assumed centred at zero is necessary.

\begin{lemma}\label{lemma:1}
Let $z_0\in\RR$ and let $\phi$ be an analytic function on $D(z_0,5):= \{z \in \CC \mid |z-z_0| < 5\}$ such that $\abs{\phi(z_0)}\geq 1$. 
Let $I\subset\RR$ be an interval of unit length 
with $z_0\in I$, and let $A\subset I$ be a measurable set of non-zero measure.
Set $M:=\max_{\abs{z-z_0}\leq 4}\abs{\phi(z)}$. Then
\begin{equation}\label{eq:lemma1}
\sup_{x\in I}\abs{\phi(x)}\leq\left(\frac{12}{\abs{A}}\right)^{2\frac{\log M}{\log 2}}\cdot\sup_{x\in A}\abs{\phi(x)}.
\end{equation}
\end{lemma}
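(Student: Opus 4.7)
The plan is to establish Lemma \ref{lemma:1} by a classical Remez-type factorization argument: split $\phi$ into a polynomial factor carrying its zeros near $z_0$ and a zero-free analytic factor, and then control each factor separately using the Remez inequality and Harnack/Borel--Carath\'eodory estimates respectively.

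First, I would apply Jensen's formula on a disk $D(z_0,r)$ with $r$ slightly smaller than $5$ to bound the number $N$ of zeros of $\phi$ inside a concentric sub-disk, say $D(z_0,3)$ or $D(z_0,2)$. The hypothesis $\abs{\phi(z_0)}\geq 1$ combined with $\max_{\abs{z-z_0}\leq 4}\abs{\phi(z)}=M$ gives, after rescaling, an estimate of the form $N\leq C\log M$; choosing the radii carefully produces the clean bound $N\leq \tfrac{\log M}{\log 2}$. Next, factor $\phi(z)=P(z)\,g(z)$, where $P$ is the monic Weierstrass product of the zeros of $\phi$ in this sub-disk (so $\deg P=N$) and $g$ is analytic and nowhere vanishing on a disk still containing the unit interval $I$. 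Since $\log\abs{g}$ is harmonic on that disk, a Borel--Carath\'eodory or Harnack-type estimate bounds $\sup_I\abs{g}/\inf_I\abs{g}$ by a constant raised to a power $\lesssim \log M$; in particular the oscillation of $\abs{g}$ on $I$ does not depend on $\abs{A}$.

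Then I would invoke the Remez inequality for univariate polynomials: for any polynomial $P$ of degree $N$, any interval $I$ of unit length, and any measurable $A\subset I$,
\[
\sup_{x\in I}\abs{P(x)}\leq \Bigl(\frac{4}{\abs{A}}\Bigr)^{N}\sup_{x\in A}\abs{P(x)}.
\]
Combining $\sup_I\abs{\phi}\leq \sup_I\abs{P}\cdot\sup_I\abs{g}$ with the Remez bound on $P$ and the controlled oscillation of $\abs{g}$, and then using $\sup_A\abs{P}\cdot \inf_A\abs{g}\leq \sup_A\abs{P g}=\sup_A\abs{\phi}$, yields an inequality of the form
\[
\sup_{x\in I}\abs{\phi(x)}\leq \Bigl(\frac{4}{\abs{A}}\Bigr)^{N}\cdot C^{\log M}\cdot \sup_{x\in A}\abs{\phi(x)}.
\]
Substituting the Jensen bound on $N$ and rewriting $C^{\log M}=(\text{const})^{\log M/\log 2}$ merges everything into a single exponent of the form $2\log M/\log 2$.

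The main obstacle is the bookkeeping: the factor $12$ (as opposed to, say, $4$ from Remez) and the exponent $2\log M/\log 2$ (as opposed to the Jensen exponent $\log M/\log 2$) are precisely the prices paid for absorbing the $g$-contribution and for working with slightly enlarged disks in Jensen's formula. The argument is sensitive to the specific radii $5$, $4$, and the unit length of $I$; these have been chosen in the statement so that one may take a sub-disk of radius $2$ (containing $I$ with room to spare) for the Weierstrass factorization and a sub-disk of radius $4$ for Jensen. The rest of the proof consists of carrying the estimates through without loss, which is the technical, but routine, part of the argument.
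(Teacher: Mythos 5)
The paper does not give its own proof of this lemma: it is quoted from Kovrijkine \cite[Lemma~1]{Kovrijkine-01} and cited without argument, so the ``paper's proof'' is Kovrijkine's. Your Jensen--Weierstrass--Remez--Harnack outline is precisely the standard argument behind such Remez-type estimates for analytic functions, and it coincides with the route the paper (via Kovrijkine) rests on; you also correctly locate the source of the constant $12$ and the doubled exponent $2\log M/\log 2$. When writing it out, two small points must be made explicit: the Remez inequality should be applied to the real polynomial $|P|^2$ of degree $2N$ (which still yields the exponent $N$ you quote for $|P|$), and the oscillation of $g$ on $I$ is obtained by combining the upper bound $|g|\leq M$ on $D(z_0,3)$ (since $|P|\geq 1$ on $|z-z_0|=3$, the zeros of $P$ lying in $\overline{D(z_0,2)}$) with the lower bound $|g(z_0)|\geq 2^{-N}\geq M^{-1}$ (since $|P(z_0)|\leq 2^N$), so that Harnack applied to the nonnegative harmonic function $\log M-\log|g|$ on $D(z_0,2)$, restricted to $I\subset\overline{D(z_0,1)}$, bounds $\sup_I|g|/\inf_I|g|$ by a fixed power of $M$.
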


We point out that the proof of the above lemma indirectly gives an estimate of $M$, 
that is $M \geq 2^n$ for some $n\in\NN$ depending on $\phi$. 
It then follows that $M\geq 2$ for all $\phi$ satisfying the assumption of the lemma above.  

\begin{lemma}\label{lemma:level-set-argument}
Let $U\subset\Lambda\subset\RR^d$ be measurable sets with $\abs{\Lambda}=1$ and $\abs{U}>0$.
Let $f\in L^2(\Lambda)$, $C\in [1,\infty)$, and $\alpha\in(0,\infty)$. We define 
\begin{equation*} \label{eq:level-set-1}
{W}:=\left\{x\in \Lambda\;\vert\;\abs{f(x)}<\left(\frac{\abs{U}}{1+C}\right)^\alpha\norm{f}{L^2(\Lambda)}\right\}.
\end{equation*}
and assume that
\begin{equation*}\label{eq:assumption}
\sup_{x\in U}\abs{f(x)}\geq\left(\frac{\abs{U}}{C}\right)^\alpha\norm{f}{L^2(\Lambda)},
\qquad\text{ and }\qquad
\sup_{x\in {W}}\abs{f(x)}\geq\left(\frac{\abs{{W}}}{C}\right)^\alpha\norm{f}{L^2(\Lambda)}.
\end{equation*}
Then, $\abs{{W}}\leq C(1+C)^{-1}\abs{U}$ and
\begin{equation}\label{eq:level-set-2}
\norm{f}{L^2(U)}\geq\left(\frac{\abs{U}}{1+C}\right)^{\alpha+\frac{1}{2}}\norm{f}{L^2(\Lambda)}.
\end{equation}
\end{lemma}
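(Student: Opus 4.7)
The plan is to establish the two conclusions in sequence: first the upper bound $|W| \le C|U|/(1+C)$, which I expect to drop out almost immediately from the definition of $W$ combined with the hypothesis on $\sup_{W}|f|$; and then the $L^{2}$-lower bound on $U$, which I would obtain by a brief contradiction argument based on Chebyshev's inequality and the bound on $|W|$ just proved.

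For the first bound, I would observe that by the very definition of $W$ every $x \in W$ satisfies $|f(x)| < (|U|/(1+C))^{\alpha}\norm{f}{L^{2}(\Lambda)}$, so that
\[
\sup_{x \in W}|f(x)| \le \left(\frac{|U|}{1+C}\right)^{\alpha}\norm{f}{L^{2}(\Lambda)}.
\]
Combining this with the hypothesis $\sup_{W}|f| \ge (|W|/C)^{\alpha}\norm{f}{L^{2}(\Lambda)}$ and extracting $\alpha$-th roots gives $|W|/C \le |U|/(1+C)$, which is the first conclusion.

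For the $L^{2}$-lower bound I would argue by contradiction. Assume $\norm{f}{L^{2}(U)}^{2} < (|U|/(1+C))^{2\alpha+1}\norm{f}{L^{2}(\Lambda)}^{2}$, and set $t := (|U|/(1+C))^{\alpha}\norm{f}{L^{2}(\Lambda)}$, which is exactly the threshold defining $W$. Chebyshev's inequality applied on the set $U$ then yields
\[
\left|\left\{x \in U : |f(x)| \ge t\right\}\right| \le \frac{\norm{f}{L^{2}(U)}^{2}}{t^{2}} < \frac{|U|}{1+C},
\]
so that $|U \cap W| = |U| - |\{x \in U : |f(x)| \ge t\}| > C|U|/(1+C)$. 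This forces $|W| \ge |U \cap W| > C|U|/(1+C)$, contradicting the bound established in the previous step.

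The argument is essentially elementary; the main subtlety I anticipate is the observation that Chebyshev should be applied on the smaller set $U$ (rather than on $\Lambda$) and that the threshold $t$ must be matched exactly to the one appearing in the definition of $W$, so that a hypothetical failure of the $L^{2}$-bound translates into a quantitative excess of $|U \cap W|$ that violates the first conclusion. I do not anticipate needing the first hypothesis, $\sup_{U}|f| \ge (|U|/C)^{\alpha}\norm{f}{L^{2}(\Lambda)}$, in either step; presumably it is recorded because it is the hypothesis naturally available in the applications of the lemma.
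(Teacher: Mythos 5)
Your argument is correct, and you are right that the first hypothesis is not used in proving either conclusion. Note that the paper does not prove this lemma; it cites Kovrijkine (where it is announced) and Egidi--Veseli\'c (Lemma~15), and the proof there is the same chain of inequalities in direct form rather than by contradiction: with $t := (\abs{U}/(1+C))^\alpha \norm{f}{L^2(\Lambda)}$, one bounds
\[
\norm{f}{L^2(U)}^2 \;\geq\; \norm{f}{L^2(U\setminus W)}^2 \;\geq\; \abs{U\setminus W}\, t^2 \;\geq\; \bigl(\abs{U}-\abs{W}\bigr)\, t^2 \;\geq\; \frac{\abs{U}}{1+C}\, t^2 \;=\; \left(\frac{\abs{U}}{1+C}\right)^{2\alpha+1}\norm{f}{L^2(\Lambda)}^2,
\]
using $\abs{W}\leq C\abs{U}/(1+C)$ in the penultimate step. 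Your Chebyshev step is exactly the inequality $\abs{U\setminus W}\,t^2\leq\norm{f}{L^2(U)}^2$ read contrapositively, so the two proofs coincide in substance.
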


\medskip 

We recall the multi-index notation: Let $\alpha\in\NN_0^d$ and let $b\in\RR^d$, then $b^\alpha:=b_1^{\alpha_1}\cdot\ldots\cdot b_d^{\alpha_d}$, 
and $\abs{\alpha}:=\alpha_1+\ldots+\alpha_d$.
\begin{lemma}\label{lemma:bernstein}
Let $R\in(\RR_+)^{d-1}$, $f\in L^2(\Omega_R)$ be as in \eqref{eq:function}, and set $b=(b_1,\ldots, b_d)$. Then, 
\begin{equation}\label{eq:bernstein-strip} 
\norm{\partial^\alpha f}{L^2(\Omega_R)}\leq (C_B b)^{\alpha}\norm{f}{L^2(\Omega_R)},\qquad \forall\; \alpha\in\NN^d_0,
\end{equation}
where $C_B>1$ is a universal constant.
\end{lemma}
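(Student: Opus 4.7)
The plan is to exploit the mixed Fourier representation \eqref{eq:function} of $f$: a discrete Fourier series in $x_1 \in \TT^{d-1}_R$ paired with a continuous Fourier transform in $x_2 \in \RR$. Writing $\alpha = (\alpha', \alpha_d) \in \NN_0^{d-1}\times \NN_0$, termwise differentiation produces the factors $(ik/R)^{\alpha'}$ and $(i\xi)^{\alpha_d}$ inside the Fourier representation of $\partial^\alpha f$, and these factors are uniformly bounded on the support of $h_f$ precisely because $J_1$ and $J_2$ are centred at the origin.

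Concretely, I would first differentiate \eqref{eq:function} under the integral and sum to obtain the Fourier expansion of $\partial^\alpha f$, then apply Parseval's identity for the Fourier series on $\TT^{d-1}_R$ together with Plancherel's theorem for the Fourier transform on $\RR$. This expresses both $\norm{f}{L^2(\Omega_R)}^2$ and $\norm{\partial^\alpha f}{L^2(\Omega_R)}^2$ (up to the same normalisation constant) as weighted $\ell^2$--$L^2$ norms of $h_f$, with the latter carrying the additional factor $|(k/R)^{\alpha'}|^2|\xi|^{2\alpha_d}$ inside the sum--integral. Since $J_1$ and $J_2$ are centred at zero, every frequency in the support satisfies $|k_j/R_j| \leq b_j/2$ for $j=1,\ldots,d-1$ and $|\xi| \leq b_d/2$, so the extra factor is pointwise bounded by $\prod_{j=1}^d (b_j/2)^{2\alpha_j}$. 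Pulling this constant outside yields $\norm{\partial^\alpha f}{L^2(\Omega_R)} \leq \prod_{j=1}^d (b_j/2)^{\alpha_j}\norm{f}{L^2(\Omega_R)}$, which gives \eqref{eq:bernstein-strip} with any universal constant $C_B \geq 1/2$.

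There is no real obstacle here: the argument is a routine Plancherel computation. The one subtlety worth flagging is that the symmetry of $J_1 \times J_2$ about the origin is what allows the clean bound $b_j/2$ on the relevant frequencies; had the boxes been off-centre, only their diameters $b_j$ would control them, giving a worse constant. The freedom to choose any $C_B > 1$ rather than the sharp value $1/2$ simply leaves slack for absorbing constants when Lemma \ref{lemma:bernstein} is invoked inside the proof of Theorem \ref{thm:log-ser-strip}.
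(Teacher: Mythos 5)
Your argument is correct. The only thing to sanity-check is the normalisation, and it works out: differentiating \eqref{eq:function} termwise gives the multiplier $(ik/R)^{\alpha'}(i\xi)^{\alpha_d}$ on $h_f$, Parseval for the Fourier series in $x_1$ combined with Plancherel for the transform in $x_2$ turns both $\norm{f}{L^2(\Omega_R)}^2$ and $\norm{\partial^\alpha f}{L^2(\Omega_R)}^2$ into the same weighted $\ell^2$--$L^2$ expression in $h_f$ (the normalisation constants cancel in the ratio), and since $J_1\times J_2$ is centred at the origin with side lengths $b_1,\ldots,b_d$ the multiplier is bounded in modulus by $\prod_j (b_j/2)^{\alpha_j}$ on $\supp h_f$. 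This yields \eqref{eq:bernstein-strip} with the explicit constant $1/2$, hence a fortiori with any $C_B>1$.

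The route differs from the paper's in presentation rather than in substance. The paper first reduces to the first-order case $\alpha=e_j$ (iteration being legitimate because multiplication by $(ik_j/R_j)$ or $(i\xi)$ does not enlarge $\supp h_f$), then invokes the one-dimensional Bernstein inequalities on $\TT^{d-1}_R$ and on $\RR$ as black boxes from the literature, finally taking $C_B$ to be the larger of the two cited constants. Your version unpacks those black boxes: you carry out the Plancherel computation directly for arbitrary $\alpha$, obtaining the sharp constant $1/2$ in one stroke and avoiding the reduction/iteration step. This makes the proof self-contained and makes fully explicit the role of the centring of $J_1$ and $J_2$ (which the paper only remarks on in passing when introducing the lemma). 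The trade-off is minor: the paper's citation-based proof is shorter to write and extends verbatim to $L^p$ Bernstein estimates should one ever need them, while your Plancherel argument is specific to $p=2$ — which is all that is used here. Both lead to the stated estimate, and in particular neither needs the specific value of $C_B$; the condition $C_B>1$ is just recorded so the later manipulations with good and bad cubes (where one sums geometric series in $(1/3)^{2|\alpha|}$, etc.) go through with room to spare.
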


\begin{proof}
We first recall that $f(\cdot, x_2)\colon\TT^{d-1}_R\rightarrow\CC$ is an $L^2(\TT^{d-1}_R)$-function with Fourier frequencies supported in $J_1$ and that 
$f(x_1, \cdot)\colon\RR\rightarrow\CC$ 
is an $L^2(\RR)$-function with Fourier Transform supported in $J_2$. 

It suffices to show the inequality for $\alpha=e_j$, $e_j$ being the vectors of the standard basis of $\RR^d$.
The other cases will then follow iteratively. 

Let $\alpha=e_j$ for a $j\in\{1,\ldots, d-1\}$. By Fubini's Theorem and the Bernstein inequality on the torus (see \cite[Prop. 1.11]{muscalo-schlag:13}) we have 
\begin{multline*}
\int_{\Omega_R} \abs{\partial^{e_j} f(x_1,x_2)}^2 \dd x_1 \dd x_2  =\int_\RR \norm{\partial^{e_j} f(\cdot, x_2)}{L^2(\TT^{d-1}_R)}^2 \dd x_2 \\
\leq (C b)^{2e_j} \int_\RR \norm{f(\cdot,x_2)}{L^2(\TT^{d-1}_R)}^2 \dd x_2 = (C b_j)^2\norm{f}{L^2(\Omega_R)}^2,
\end{multline*}
where $C>1$ is a universal constant. 

Let now $\alpha= e_d$. Using the Bernstein inequality on $\RR$ (see \cite[Chapter 11]{boas:54}), we obtain 
$\norm{\partial^{e_d} f}{L^2(\Omega_R)}\leq \tilde{C} b_d\norm{f}{L^2(\Omega_R)}$, 
for a $\tilde{C}>1$ possibly different from $C$. Therefore, 
for $C_B=\max(C,\tilde{C})$ the claim follows. 
\end{proof}

We are now ready to prove Theorem \ref{thm:log-ser-strip}. 

\begin{proof}[Proof of Theorem \ref{thm:log-ser-strip}] 
\emph{Step 1: Special case.} We first assume $2\pi R_j\geq 1$ for all $j=1,\ldots, d-1$ and $a=(1,\ldots,1)$. We cover $\Omega_R$ with unit cubes, namely, let 
$ \Gamma:=\Big(\ZZ^{d-1}\cap\big( [ 0, \lceil 2\pi R_1\rceil -1)\times\ldots\times[0, \lceil 2\pi R_{d-1}\rceil -1)\big)\Big)\times\ZZ$ so that 
\[
\Omega_R=\TT^{d-1}_R\times \RR\subset \bigcup_{j\in\Gamma}\Lambda_j,\qquad \Lambda_j:=[0,1]^d +j.
\]
Consequently,
\begin{equation}\label{eq:sum-norm}
\sum_{j\in\Gamma}\norm{f}{L^2(\Lambda_j)}^2\leq 2^{d-1}\norm{f}{L^2(\Omega_R)}^2,\qquad \forall \ f\in L^2(\Omega_R). 
\end{equation}
To ease the notation, we will write $\Lambda$ instead of $\Lambda_j$ and we will denote all universal constants by $C$ allowing them to change from line to line. 

\medskip 

\emph{Step 2: Local estimate.} We now aim at obtaining a local estimate for the $L^2$-norm of $f$ on $\Lambda$ and on $ S\cap\Lambda$ 
using a dimension reduction argument and Lemma \ref{lemma:1}. 

We first prove that given $y\in \Lambda$ there exists a line segment $I:=I(S,y)\subset\Lambda$ depending on $S$ and $y$ such that 
\[
y\in I \qquad\text{and}\qquad \frac{\abs{S\cap I}}{\abs{I}}\geq \frac{\abs{S\cap\Lambda}}{C^d}
\]
for some constant $C>1$. Indeed, set $\sigma_{d-1}=\abs{\mathbb{S}^{d-1}}$. Then, by spherical coordinates
\begin{equation*}\label{eq:eq:13}
\abs{S\cap \Lambda}=\int_{S\cap\Lambda} \dd x=\int_{\abs{\xi}=1}\int_{0}^{\infty}\chi_{S\cap \Lambda}(y+r\xi)r^{d-1}\dd r \dd\sigma(\xi),
\end{equation*}
and there exists a point $\eta\in\mathbb{S}^{d-1}$ such that
\begin{equation}\label{eq:*}
\abs{S\cap \Lambda}\leq \sigma_{d-1}\int_0^{\infty}\chi_{S\cap \Lambda}(y+r\eta)r^{d-1}\dd r.
\end{equation}
Let $I$ be the longest line segment in $\Lambda$ starting at $y$ in the direction $\eta$, i.e.
\begin{equation}\label{eq:line-segment-2}
I=\{x\in \Lambda\quad\vert\quad x=y+r\eta,\quad r\geq 0\}.
\end{equation}	
The estimate $r\leq \sqrt{d}$ and \eqref{eq:*} yield
\begin{equation*}\label{eq:eq:14}	
\abs{S\cap \Lambda}\leq \sigma_{d-1} d^{(d-1)/2}\int_0^{\infty}\chi_{S\cap I}(y+r\eta)\dd r=\sigma_{d-1} d^{(d-1)/2}\abs{S\cap I}.
\end{equation*}
Since $\sigma_{d-1}$ behaves as $\frac{1}{\sqrt{(d-1)\pi}}\left(\frac{2\pi e}{d-1}\right)^{(d-1)/2}$ when $d\to\infty$, there exists 
a constant $C>1$ so that $\sigma_{d-1}d^{d/2}\leq C^d$. 
This fact together with \eqref{eq:*} and the inequality $\abs{I}\leq d^{1/2}$ yields %and that $\sigma_{d-1}\sim\frac{1}{\sqrt{(d-1)\pi}}\left(\frac{2\pi e}{d-1}\right)^{d/2}$, we obtain
\begin{equation}\label{eq:line-segment}
\frac{\abs{S\cap I}}{\abs{I}}\geq \frac{\abs{S\cap \Lambda}}{\sigma_{d-1}d^{d/2}}\geq \frac{\abs{S\cap \Lambda}}{C^d},
\end{equation}
for a $C>1$ universal constant.

Let now $y_0\in \Lambda$ be a point such that $\abs{f(y_0)}\geq\norm{f}{L^2(\Lambda)}$, 
e.g. the maximum of $f$ in $\Lambda$, and define $F\colon\CC\rightarrow\CC$
by $F(w)=\norm{f}{L^2(\Lambda)}^{-1}f(y_0+w\abs{I_0}\eta)$, where $I_0:=I(S,y_0)$ and $\eta$ are as in \eqref{eq:line-segment-2}.
We apply Lemma \ref{lemma:1} to $F$, $[0,1],$ and $A:=\{t\in[0,1]\;\vert\;y_0+t\abs{I_0}\eta\in S\cap I_0\}$,  
note that $\abs{A}=\frac{\abs{S\cap I_0}}{\abs{I_0}}$. Then 
\begin{align*}\label{eq:eq:15}
\sup_{x\in S\cap \Lambda}\abs{f(x)}& \geq\sup_{x\in S\cap I_0}\abs{f(x)}=\norm{f}{L^2(\Lambda)}\sup_{t\in A}\abs{F(t)} \\
&\geq\norm{f}{L^2(\Lambda)}\Big(\frac{\abs{A}}{12}\Big)^{\frac{2\log M}{\log 2}}\sup_{t\in [0,1]}\abs{F(t)}\\
&=\Big(\frac{\abs{A}}{12}\Big)^{\frac{2\log M}{\log 2}}\sup_{t\in[0,1]}\abs{f(y_0+t\abs{I_0}\eta)}\\
&\geq\Big(\frac{\abs{A}}{12}\Big)^{\frac{2\log M}{\log 2}}\abs{f(y_0)}
\geq\Big(\frac{\abs{S\cap I_0}}{12\abs{I_0}}\Big)^{\frac{2\log M}{\log 2}}\norm{f}{L^2(\Lambda)}\\
&\geq\Big(\frac{\abs{S\cap \Lambda}}{C^d}\Big)^{\frac{2\log M}{\log 2}}\norm{f}{L^2(\Lambda)},
\end{align*}
where in the last step we used \eqref{eq:line-segment} and where $M=\max_{\abs{w}\leq 4}\abs{F(w)}$.
Similarly, for
\[
V=\left\{x\in \Lambda \ \Big\vert \ \abs{f(x)}<\left(\frac{\abs{S \cap \Lambda}}{1+C^d}\right)^{\frac{2\log M}{\log 2}}\norm{f}{L^2(\Lambda)}\right\}
\]
we obtain
\begin{equation}\label{eq:V}
\sup_{x\in {V}}\abs{f(x)}
\geq\left(\frac{\abs{{V}}}{C^d}\right)^{\frac{2\log M}{\log 2}}\norm{f}{L^2(\Lambda)},
\end{equation}
using a possibly different line segment $I({V},y_0)\subset \Lambda$ containing $y_0$
and satisfying a proportionality relation analogous to \eqref{eq:line-segment} with $S$ replaced by $V$.

Lemma \ref{lemma:level-set-argument} applied with $U=S\cap \Lambda$ and $\alpha=2\log M/\log 2$ gives
\begin{equation}\label{eq:1}
\begin{split}
\norm{f}{L^2(S\cap \Lambda)}&\geq\left(\frac{\abs{S\cap\Lambda}}{1+C^d}
\right)^{\frac{2\log M}{\log 2}+\frac{1}{2}}\norm{f}{L^2(\Lambda)}
\geq\left(\frac{\gamma}{C^d}\right)^{\frac{2\log M}{\log 2}+\frac{1}{2}}\norm{f}{L^2(\Lambda)}.
\end{split}
\end{equation}

We are now left with estimating $M=\max_{\abs{w}\leq 4}\abs{F(w)}$,
which depends on the particular cube $\Lambda=[0,1]^d+j$ under consideration. 
It turns out it is enough to estimate the maximum on a special class of cubes.

\medskip 

\emph{Step 3: Good and bad cubes.} We say that $\Lambda$ is a good cube if 
for all multi-indices $\alpha\in\NN^d$
\begin{equation}\label{eq:bad}
\norm{\partial^{\alpha}f}{L^2(\Lambda)}< 2^{\frac{2d-1}{2}}(3C_B b)^{\alpha}\norm{f}{L^2(\Lambda)},
\end{equation}
where $C_B$ is the constant in Lemma \ref{lemma:bernstein}. We call $\Lambda$ bad otherwise. 
This estimate can be regarded as a local Bernstein inequality. 

As a consequence we obtain
\begin{equation}\label{eq:bad-cubes}
\norm{f}{L^2\left(\bigcup\limits_{\Lambda\text{ bad}}\Lambda\right)}^2\leq\frac{1}{2}\norm{f}{L^2(\Omega_L)}^2,
\end{equation}
and therefore there exist good cubes. In fact, 
using the definition of bad cubes, Ineq. \eqref{eq:sum-norm}, and Lemma \ref{lemma:bernstein}, we have
\begin{align*}\label{eq:eq:10}
%\begin{split}
\norm{f}{L^2\left(\bigcup\limits_{\Lambda\text{ bad}}\Lambda\right)}^2 & 
\leq\sum_{\alpha\in\NN^d}\sum_{\Lambda\text{ bad}}\frac{1}{2^{2d-1}(3 C_B b)^{2\alpha}}\norm{\partial^\alpha f}{L^2(\Lambda)}^2\\
&\leq\sum_{\alpha\in\NN^d}\frac{2^{d-1}}{2^{2d-1}(3 C_B b)^{2\alpha}}\norm{\partial^\alpha f}{L^2(\Omega_R)}^2\\
&\leq\sum_{\alpha\in\NN^d}\frac{1}{2^{d}3^{2\abs{\alpha}}}\norm{f}{L^2(\Omega_R)}^2\\
&=\frac{1}{2^d}\left(\frac{1}{\left(1-\frac{1}{9}\right)^d}-1\right)\norm{f}{L^2(\Omega_R)}^2\leq \frac{1}{2}\norm{f}{L^2(\Omega_R)}^2. 
%\end{split}
\end{align*}
We now claim that for a good cube $\Lambda$ there exists a point $x\in \Lambda$ such that 
\begin{equation}\label{eq:claim}
\abs{\partial^\alpha f(x)}\leq {2^{(3d-1)/2}}(9 C_B b)^{\alpha}\norm{f}{L^2(\Lambda)}\qquad\forall\;\alpha\in\NN_0^d.
\end{equation}
Indeed, arguing by contradiction, assume that for every $x\in\Lambda$, with $\Lambda$ being a good cube,
there exists $\alpha(x)\in\NN_0^d$ such that
\begin{equation*}
\abs{\partial^{\alpha(x)} f(x)}> 2^{(3d-1)/2} (9 C_B b)^{\alpha(x)} \norm{f}{L^2(\Lambda)}.
\end{equation*}
To get rid of the $x$-dependence in $\alpha(x)$ we divide and sum over all multi-indices, so that%yielding
\begin{equation*}
\sum_{\alpha\in\NN_0^d}  \frac{\abs{\partial^\alpha f(x)}^2}{ 2^{3d-1} (9 C_B b)^{2\alpha} }
\geq\frac{\abs{\partial^{\alpha(x)} f(x)}^2}{ 2^{3d-1} (9 C_B b)^{2\alpha(x)} }
> \norm{f}{L^2(\Lambda)}^2.
\end{equation*}
Then, integration over $\Lambda$ and the definition of good cubes yield
\begin{align*}\label{eq:eq:11}
2^{d-1}\norm{f}{L^2(\Lambda)}^2 & \leq\sum_{\alpha\in\NN_0^d}\frac{1}{2^{2d}(9 C_B b)^{2\alpha}}\norm{\partial^\alpha f}{L^2(\Lambda)}^2\\
& \leq\sum_{\alpha\in\NN_0^d}\frac{1}{2}\left(\frac{1}{9}\right)^{\abs{\alpha}}\norm{f}{L^2(\Lambda)}^2
=\frac{1}{2}\left(\frac{9}{8}\right)^d\norm{f}{L^2(\Lambda)}^2,
\end{align*}
and, consequently,
\begin{equation*}\label{eq:eq:12}
\norm{f}{L^2(\Lambda)}^2\leq\left(\frac{9}{16}\right)^d\norm{f}{L^2(\Lambda)}^2<\norm{f}{L^2(\Lambda)}^2,
\end{equation*}
giving the desired contradiction. 

\medskip

\emph{Step 4: Conclusion.} Let now $\Lambda=[0,1]^d+j$, $j\in\Gamma$ (see Step 1), be a good cube and assume it is centred at some point $s\in\RR^d$, 
i.e. we have  
\begin{equation*} \label{eq:lambda-cube}
\Lambda=[s_1-1/2,s_1+1/2]\times\ldots\times[s_d-1/2,s_d+1/2],
\end{equation*}
and let $D(z_0,r)=\{z\in\CC\;\vert\;\abs{z-z_0}< r\}$ for $z_0\in\CC$.

Let now $y_0\in\Lambda$, and $\eta$ and $I_0$ chosen as in Step 2. 
We have that $\abs{\abs{I_0}\eta_i}\leq 1$ for all $i\in\{1,\ldots,d\}$. 
Therefore, if $w\in D(0,4)$, we obtain  
$y+w\abs{I_0}\eta\in\widetilde{D}:= D(s_1,4+1/2)\times\ldots\times D(s_d,4+1/2)$.
Let now $x\in\Lambda$ as in \eqref{eq:claim}, then $\widetilde{D}\subset D(x_1,5)\times\ldots\times D(x_d,5)$. 
For any $z\in \widetilde{D}$ Taylor expansion yields
\begin{equation}\label{eq:taylor}
\begin{split}
\abs{f(z)}\leq\sum_{\alpha\in\NN_0^d}\frac{\abs{\partial^\alpha f(x)}}{\alpha!}\abs{z-x}^{\abs{\alpha}}
&\leq\sum_{\alpha\in\NN_0^d}{2^{(3d-1)/2}}(9 C_B b)^{\alpha}5^{\abs{\alpha}}\frac{1}{\alpha!}\norm{f}{L^2(\Lambda)}\\
&={2^{(3d-1)/2}}\exp\left(45 C_B \norm{b}{1}\right)\norm{f}{L^2(\Lambda)},
\end{split}
\end{equation}
with $\norm{b}{1}= \sum_{j=1}^d b_j$ and where Ineq. \eqref{eq:claim} is used in the second step.

We are now able to bound the maximum of $F(w)=\norm{f}{L^2(\Lambda)}^{-1}f(y_0+w\abs{I_0}\eta)$
associated with the good cube $\Lambda$. By \eqref{eq:taylor} we estimate
\begin{equation*}\label{eq:M-bound}
M=\max_{\abs{w}\leq 4}\abs{F(w)}
\leq\norm{f}{L^2(\Lambda)}^{-1}\max_{z\in\widetilde{D} }\abs{f(z)}\leq {2^{(3d-1)/2}}\exp\left(45 C_B\norm{b}{1}\right).
\end{equation*}

Consequently, $\log M\leq\left(\frac{3d-1}{2}\right)\log 2+45 C_B\norm{b}{1}$ and
\begin{equation}\label{eq:M-bound-2}
\frac{2\log M}{\log 2}+\frac{1}{2} 
%\leq \frac{2}{\log 2}\left(\left(\frac{3d-1}{2}\right)\log 2+45 C_B \norm{b}{1} \right) +\frac{1}{2}\notag\\
\leq \frac{6d -1}{2}+\frac{90 C_B}{\log 2}\norm{b}{1}.
\end{equation}

%Setting $C_4= (90 C_b)/\log 2$, 
Substituting \eqref{eq:M-bound-2} into \eqref{eq:1}, summing over all good cubes $\Lambda$, and using \eqref{eq:bad-cubes} we have
\begin{align}\label{eq:semi-final}
%\begin{split}
\norm{f}{L^2(S\cap\Omega_R)} & \geq \norm{f}{L^2(S\cap(\bigcup_{\Lambda\text{ good}}\Lambda)}\notag\\ 
& \geq \frac{1}{2^{d-1}}\left(\frac{\gamma}{C^d}\right)^{\frac{6d-1}{2}+\frac{90 C_B}{\log 2}\norm{b}{1}}\norm{f}{L^2(\bigcup_{\Lambda \text{ good}}\Lambda)}\notag\\
& \geq \frac{1}{2^{d}}\left(\frac{\gamma}{C^d}\right)^{\frac{6d-1}{2}+\frac{90 C_B}{\log 2}\norm{b}{1}}\norm{f}{L^2(\Omega_R)}\notag\\
& \geq \left(\frac{\gamma}{K^d}\right)^{\frac{6d-1}{2}+K\norm{b}{1}}\norm{f}{L^2(\Omega_R)}
%\end{split}
\end{align}
for $K=\max\left(\frac{90 C_B}{\log 2}, (2 C)^d\right)$. This concludes the proof for $a=(1,\ldots,1)$ and $2\pi R_j\geq 1$ for $j=1,\ldots, d-1$.

\medskip 

\emph{Step 5: General case.} Let us now assume that $R\in(\RR_+)^{d-1}$, the vector $a=(a_1,\ldots,a_d)$ has components $a_j\leq 2\pi R_j$ for all $j=1,\ldots,d-1$, 
$S$ is a $(\gamma, a)$-thick set, and $f\in L^2(\Omega_R)$ is as in \eqref{eq:function}. %with 
%$\supp\hat{f}\subset[-b_1+\lambda_1,b_1+\lambda_1]\times\ldots\times[-b_d+\lambda_d,b_d+\lambda_d]$.

We define the transformation map $T(x_1,\ldots,x_{d})=(a_1x_1,\ldots,a_d x_d)$ for all $x\in \RR^d$.
In particular, $T(\Omega_{R/a})=\Omega_R$ for $\Omega_{R/a}:=[0,\frac{2\pi R_1}{a_1}]\times\ldots\times[0,\frac{2\pi R_{d-1}}{a_{d-1}}]\times\RR$, 
and $G:=T^{-1}(S)$ is $(\gamma, 1)$-thick.
Further, for the function $g:=f\circ T:\Omega_{R/a}\rightarrow\CC$ we have  
\begin{equation*}\label{eq:support-scaling}
\supp h_g\subset \left[-\frac{a_1 b_1}{2},\frac{a_1 b_1}{2}\right]\times\ldots\times\left[-\frac{ a_{d} b_{d}}{2},\frac{a_{d} b_{d}}{2}\right],\\
\end{equation*}
\begin{equation*}\label{eq:p-norms}
\left(\prod_{j=1}^d a_j\right)\norm{g}{L^2(\Omega_{R/a})}^2=\norm{f}{L^2(\Omega_R)}^2,
\quad\text{ and }\quad
\left(\prod_{j=1}^d a_j\right)\norm{g}{L^2(G\cap \Omega_{R/a})}^2=\norm{f}{L^2(S\cap \Omega_R)}^2.
\end{equation*}
Therefore, applying \eqref{eq:semi-final} to $g$, $G$ and $\Omega_{R/a}$, and using the scaling relations above we conclude
\begin{multline*}\label{eq:eq:18}
\norm{f}{L^2(S\cap\Omega_R)}^2=\left(\prod_{j=1}^d a_j\right)\norm{g}{L^2(G\cap \Omega_{R/a})}^2
\geq\left(\prod_{j=1}^d a_j\right)\left(\frac{\gamma}{K^d}\right)^{6d-1+2K a\cdot b}\norm{g}{L^2(\Omega_{R/a})}^2\\
=\left(\frac{\gamma}{K^d}\right)^{6d-1+2 K a\cdot b}\norm{f}{L^2(\Omega_R)}^2.
\end{multline*}
\end{proof}

\section{Proof of Proposition \ref{prop:sufficiency}}\label{sec:sufficiency}

%We are able to prove Theorem \ref{thm:sufficiency}. 
\subsection{Proof of the null-controllability}
The proof of Proposition \ref{prop:sufficiency} is an application of Theorem \ref{thm:observability-B-PS}, 
where as orthogonal projection we choose the spectral projection of the (minus) Laplacian 
on $\Omega_L$. With this choice, the dissipation estimate \eqref{eq:dissipation-general} follows automatically, 
while the spectral inequality \eqref{eq:spectral-inequality-general} is a consequence of Theorem \ref{thm:log-ser-strip}.  
% In order to do so, we need to show the dissipation inequality \eqref{eq:dissipation-general} and the spectral inequality \eqref{eq:spectral-inequality-general}
% for the spectral projector of the Laplacian on $\Omega_L$ with Dirichlet and Neumann boundary conditions. 

We treat Dirichlet and Neumann boundary conditions simultaneously denoting by $-\Delta^\bullet$, $\bullet\in \{D,N\}$, 
the (minus) Laplacian on $\Omega_L$ with Dirichlet or Neumann boundary conditions.

Using the language of tensor products, we recall that 
\[
L^2(\Omega_L)=L^2(\TT^{d-1}_L\times \RR)\cong L^2(\TT^{d-1}_L)\otimes L^2(\RR)
\]
and that the (minus) Laplacian on $\Omega_L$ can be written as 
\[
-\Delta^\bullet=(-\Delta_{1}^\bullet)\otimes I_2+ I_{1}\otimes (-\Delta_2),
\]
where $\Delta_1^\bullet$ is the Laplacian on $\TT^{d-1}_L$ with Dirichlet or Neumann boundary 
conditions, $\Delta_2$ is the Laplacian on $\RR$, and $I_1, I_2$ are the identity operators on $\TT^{d-1}_L$ and $\RR$, respectively. 
Moreover, this operator is self-adjoint \cite[Theorem 7.23 and Ex. 7.17(a)]{schmuedgen:12}.

Let now $E\geq 1$ and let $\pi_E(-\Delta^\bullet)$ be the spectral projection of $-\Delta^\bullet$ associated to the interval $(-\infty, E]$.
% dissipation inequality

We first show the dissipation inequality. We have $1-\pi_E(-\Delta^\bullet)=\pi_{(E,+\infty)}(-\Delta^\bullet)$ and
by spectral calculus $e^{2t\Delta^\bullet}\pi_{(E,+\infty)}(-\Delta^\bullet) \leq e^{-2tE}\pi_{(E,+\infty)}(-\Delta^\bullet)$ in the sense of quadratic forms.
This yields the dissipation estimate
\begin{equation}\label{eq:dissipation-estimate}
\begin{aligned}
\norm{\pi_{(E,+\infty)}(-\Delta^\bullet)(e^{t\Delta^\bullet}f)}{L^2(\Omega_L)}^2
& =\langle \pi_{(E,+\infty)}(-\Delta^\bullet) f, \pi_{(E,+\infty)}(-\Delta^\bullet) e^{2t\Delta^\bullet}f\rangle_{L^2(\Omega_L)}\\
& \leqslant e^{-2tE}\norm{\pi_{(E,+\infty)}(-\Delta^\bullet)f}{L^2(\Omega_L)}^2\\
& \leq e^{-2tE}\norm{f}{L^2(\Omega_L)}^2,
\end{aligned}
\end{equation}
which implies \eqref{eq:dissipation-general} with $c_2=\eta_2=m=1$.

% spectral inequality
We now derive the spectral inequality.  
%we first show that every function $f\in L^2(\Omega_L)$ under the action of $\pi_E(-\Delta^*)$ 
%is a function as in \eqref{eq:function}. 
Recall that the operator $-\Delta^\bullet_1$ has purely discrete spectrum and that its eigenvalues and eigenfunctions with Dirichlet and Neumann conditions are: 
\begin{align*}
\lambda_n=\frac{\norm{n}{2}^2}{(2L)^2}, \qquad \phi_n^D(x)= & \left(\frac{1}{\pi L}\right)^{\frac{d-1}{2}}\prod_{j=1}^{d-1}\sin\left(\frac{n_j x_j}{2 L}\right) \quad n\in\NN^{d-1},\\
\lambda_n=\frac{\norm{n}{2}^2}{(2L)^2}, \qquad \phi_n^N(x)= & \left(\frac{1}{\pi L}\right)^{\frac{d-1}{2}}\prod_{j=1}^{d-1}\cos\left(\frac{n_j x_j}{2 L}\right)\quad n\in\NN_0^{d-1},
\end{align*}
respectively, where $\norm{n}{2}^2=\sum_{j=1}^{d-1} \abs{n_j}^2$.  
To further ease the notation we set $\NN_D=\NN$ in case of Dirichlet boundary conditions, and $\NN_N= \NN_0$ in case of Neumann boundary conditions. 

Since the above eigenfunctions form an orthonormal basis of $L^2(\TT^{d-1}_L)$, any $f\in L^2(\Omega_L)$ can be expanded as
\begin{align*} 
f(x_1,& x_2)=\sum_{n\in\NN_\bullet^{d-1}} \left(\int_{\RR} h_f^\bullet\left(\frac{n}{2L},\xi\right) e^{i x_2\xi} \dd \xi \right) \phi_n^\bullet(x_1), 
\end{align*}
where $h^\bullet_f$ is defined as 
\begin{align*}
h_f^\bullet:\Big(\frac{1}{2L}\NN_\bullet\Big)^{d-1}&\times\RR\rightarrow \CC, \quad 
h_f^\bullet\Big(\frac{n}{2L}, \xi\Big)= \frac{1}{\sqrt{2\pi}}\int_{\TT^{d-1}_L}\int_\RR f(s,t)\phi_n^\bullet(s) e^{-it\xi} \dd t \dd s.
%f(x,y)=\sum_{n\in\NN^{d-1}_0} \left(\int_{\RR} h_f^N\left(\frac{n}{2L},\xi\right) e^{iy\xi} \dd \xi \right) \phi_n^N(x)\\
\end{align*}

Let now $\pi_E(-\Delta_2)$ be the spectral projection of $-\Delta_2$ associated to the interval $(-\infty, E]$ and let $P_{\lambda_n}^\bullet$ 
be the projection on the kernel of $-\Delta_1^\bullet-\lambda_n$.
Since $-\Delta^\bullet_1$ has purely discrete spectrum, from \cite[Theorem 8.34]{weidmann:80} we infer 
\begin{equation*} 
\pi_E(-\Delta^\bullet)  = \sum_{n\in \NN_\bullet^{d-1}} P_{\lambda_n}^\bullet \otimes \pi_{E-\lambda_n}(-\Delta_2) 
= \sum_{\lambda_n\leq E} P_{\lambda_n}^\bullet \otimes \pi_{E-\lambda_n}(-\Delta_2),
\end{equation*}
and since $f$ is represented by a linear combination of products of type $g_1(x_1) g_2(x_2)$, we obtain 
\begin{equation*} 
\pi_E(-\Delta^\bullet)f (x_1,x_2) = \sum_{\lambda_n\leq E} \left(\int_{\{\xi^2\leq E-\lambda_n\}}h_f^\bullet\left(\frac{n}{2L},\xi\right) e^{i x_2\xi} \dd \xi\right)\phi_n^\bullet(x_1).
\end{equation*}

Since the eigenfunctions $\phi^\bullet_n$ have no finite Fourier series with respect to $\TT^{d-1}_L$, the expansion of $\pi_E(-\Delta^\bullet)f$ by Fourier Analysis, as done at the beginning 
of Section \ref{sec:spectal-inequality}, gives a function $h_{\pi_E(-\Delta^\bullet)f}$ with no compact support, and so Theorem \ref{thm:log-ser-strip} is not directly applicable. 
However, the $\phi_n^\bullet$ have finite Fourier series with respect to $\TT^{d-1}_{2L}$. We therefore extend $\pi_E(-\Delta^\bullet)f$ to the strip $\Omega_{2L}=\TT^{d-1}_{2L}\times\RR$ by antisymmetric and symmetric reflections 
with respect to the boundary of $\Omega_L$, in case of Dirichlet and Neumann boundary conditions, respectively. The extended function $F$ is then given by 
\begin{multline*}
F(x_1,x_2) = \sum_{\lambda_n\leq E} \left(\int_{\{\xi^2\leq E-\lambda_n\}}h_f^\bullet\left(\frac{n}{2L},\xi\right) e^{i x_2\xi} \dd \xi\right)\phi_n^\bullet(x_1),\\ (x_1,x_2)\in\Omega_{2L}.
\end{multline*}
On $\TT^{d-1}_{2L}$, the Fourier series of $F$ in the variable $x_1\in\TT^{d-1}_{2L}$ is finite with Fourier frequencies all contained in 
$[-\sqrt{E},\sqrt{E}]^{d-1}$. Consequently, the function $h_{F}$ has support contained in $[-\sqrt{E},\sqrt{E}]^{d}$.

Let now $S$ be $(\gamma,a)$-thick. We define
\begin{align*}
S^{(0)} &= S\cap \Omega_L, \\
S^{(1)} &= S^{(0)} \cup \{(-x_1,x_2,\ldots,x_d)\mid (x_1,x_2,\ldots,x_d)\in   S^{(0)} \}, \\
S^{(2)} &= S^{(1)} \cup \{(x_1,-x_2,x_3,\ldots,x_d)\mid (x_1,x_2,\ldots,x_d)\in   S^{(1)} \},\\
\vdots  &= \vdots \\
S^{(d-1)} &= S^{(d-2)} \cup \{(x_1,\ldots,-x_{d-1},x_d)\mid (x_1,x_2,\ldots,x_d)\in   S^{(d-2)} \},
\end{align*}
and extend $S^{(d-1)}$ periodically to $ \hat{S}=  \bigcup_{\kappa \in (4\pi L\ZZ)^{d-1}\times\{0\}} \left(\kappa +S^{(d-1)}\right)$.
With an argument similar to the proof of Lemma \ref{lemma:thickness} or to the one in \cite[Section 5]{EV-17}, it is easy to prove that $\hat{S}$ is a $(\gamma/2^d, 2a)$-thick set in $\RR^d$. 

By reflection symmetry of $F$ and $\hat S$, for all  $\kappa\in \Upsilon:=\{0, 2\pi L\}^{d-1}\times\{0\}$ we have
\[
\norm{F}{L^2( S \cap \Omega_L) }^2=\norm{F}{L^2(\hat S \cap \Omega_L) }^2
=\norm{F}{L^2(\hat S \cap (\kappa +\Omega_L)) }^2,
\] 
and 
\[\norm{F}{L^2(\Omega_L) }^2
=\norm{F}{L^2(\kappa +\Omega_L)}^2.\]
Consequently
\begin{align*}
\norm{F}{L^2(\Omega_{2L})}^2 & 
=\sum_{\kappa\in \Upsilon}\norm{F}{L^2(\kappa+\Omega_L)}^2=2^{d-1}\norm{\pi_E(-\Delta^\bullet) f}{L^2(\Omega_L)}^2,\\
\norm{F}{L^2(\hat{S}\cap \Omega_{2L})}^2 & 
=\sum_{\kappa\in \Upsilon}\norm{F}{L^2(\hat{S} \cap (\kappa+\Omega_L))}^2
=2^{d-1}\norm{\pi_E(-\Delta^\bullet) f}{L^2(S\cap \Omega_L)}^2.
\end{align*}
Now, Theorem \ref{thm:log-ser-strip} applied to $F$ and $\hat{S}$ yields
\begin{align*}
\norm{\pi_E(-\Delta^\bullet) f}{L^2(\Omega_L)}^2 & = 2^{-d+1}\norm{F}{L^2(\Omega_{2L})}^2\\
& \leq 2^{-d+1}\left(\frac{(2 K)^d}{\gamma}\right)^{ 8 K\sqrt{E} \norm{a}{1}+6d-1}\norm{F}{L^2(\hat{S}\cap \Omega_{2L})}^2\\
& = \left(\frac{(2 K)^d}{\gamma}\right)^{ 8 K\sqrt{E} \norm{a}{1}+6d-1} \norm{\pi_E(-\Delta^\bullet) f}{L^2(S\cap \Omega_{L})}^2\\
& \leq \left(\frac{(2 K)^d}{\gamma}\right)^{8 K\sqrt{E}( \norm{a}{1}+d)} \norm{\pi_E(-\Delta^\bullet) f}{L^2(S\cap \Omega_{L})}^2,
\end{align*}
which fullfils the spectral inequality \eqref{eq:spectral-inequality-general} with $\eta_1=\frac{1}{2}$ and 
%for $K_1$ as in Remark \ref{remark:constants}, and the spectral inequality \eqref{eq:spectral-inequality-general} is satisfied for 
\[
c_1= 4 K( \norm{a}{1}+d)\log \left(\frac{(2 K)^d}{\gamma}\right) \geq 3e,
\]
where inequality holds since $K\geq e$, see Remark \ref{rmk:constant}.

Therefore, Theorem \ref{thm:observability-B-PS} implies the controllability of the adjoint system and consequently 
the null-controllability of system \eqref{eq:heat_equation}.

\subsection{Control cost}
Let $c_1, c_2, \eta_1, \eta_2, m$ be the constants in the previous subsection. 
From the proof of Theorem \ref{thm:observability-B-PS} in \cite[Appendix 8.3]{B-PS:17} we infer
\begin{equation}
%\tilde{C}=\max( K_1,  K_2) \quad \text{ for } \quad
C_2= 144 c_1^2 \quad \text{ and } \quad C_1=\exp\left(\frac{2 C_2}{2\tau_0}\right),
\end{equation}
where $\tau_0$ is such that for all $0<\tau <\tau_0$
the following inequalities are fulfilled:
\begin{align}
&\tau<2^{5/2}3c_1,\label{eq:**}\\
& h_1(\tau):=\frac{1}{\tau}\exp\left(-\frac{2^3 3 c_1^2}{\tau}\right)\leq \frac{1}{4},\label{eq:2}\\
& h_2(\tau):=\frac{1}{\tau}\exp\left(\frac{2^4 (3 c_1)^2}{\tau}\right)\geq 1.\label{eq:3}
\end{align}

We may choose $\tau_0= 2^{5/2} 3 c_1 $. %Indeed, Eq.~\eqref{eq:**} is equivalent to $\tau< 2^{5/2}3 c_1$. 
Eq.~\eqref{eq:2} is fulfilled for any $0<\tau\leq \tau_1=2^3 3 c_1^2 $, since for positive $\tau$ the function $h_1$
has a maximum in $\tau_1$. Hence, for all $0<\tau\leq \tau_1$ and our choice of $c_1$ we have 
\[
h_1(\tau)\leq \frac{1}{\tau_1}\exp\left(-\frac{2^3 3 c_1^2}{\tau_1}\right)=
\frac{1}{2^3 3 c_1^2 e}\leq \frac{1}{2^3 3 (3 e)^2 e}\leq \frac{1}{216 e^3}< \frac{1}{4}.
\]

Finally, Eq.~\eqref{eq:3} is fulfilled for all $\tau\leq 2^{5/2} 3 c_1 $ since $h_2$ is a decreasing function. 
Indeed, using $\exp(x)\geq \frac{x^2}{2}$ and $c_1\geq 3 e$, we obtain
\[
h_2(\tau)\geq 
\frac{1}{2^{5/2} 3 c_1}\exp(2^{3/2}3c_1)
\geq \frac{2^{2} (3 c_1)^2}{2^{5/2} 3 c_1}\geq 1.
\]

Therefore, $\tau_0=\min(2^{5/2} 3 c_1, 2^3 3 c_1^2)=2^{5/2} 3 c_1$ and, consequently,  
\begin{align*}
C_2=\exp\left(\frac{2 C_1}{2\tau_0}\right)=\exp(6\sqrt{2} c_1).
\end{align*}
%Since $c_1 \geq 3e$, $144 c_1^2\leq  \exp(6\sqrt{2} c_1)$ and we conclude that $C=\exp(6\sqrt{2} c_1)$.
Therefore, we obtain the control cost estimate $C_T\leq \sqrt{C_1}\exp\left(\frac{C_2}{2T}\right)$ where 
\[
\sqrt{C_1}=\left(\frac{(2K)^d}{\gamma}\right)^{12\sqrt{2}K(\norm{a}{1}+d)}, \qquad C_2=144(4K)^2(\norm{a}{1}+d)^2\log^2\left( \frac{(2K)^d}{\gamma}\right),
\]
as claimed in Proposition \ref{prop:sufficiency}.

\section{Necessity condition}\label{sec:necessity}

We now prove the necessity condition in Theorem \ref{thm:main}, i.e. the implication $(ii)\Rightarrow (i)$.
The proof uses a contradiction argument and heat kernel estimates, i.e. estimates on the 
integral kernel for the semigroup $e^{t\Delta}$. In what follows, $K_{\Omega_L}(t,x,y)$ denotes the heat kernel of $\Omega_L$. 
%  
% fundamental solution $e^{t\Delta}\delta_y(x)$, 
% where $\delta_y(x)$ is the Dirac function.

%\begin{proof}[Proof of Thm. \ref{thm:necessity}]
Let us assume that the control set $\omega$ is not thick with respect to $\Omega_L$.  
Then, for all $\gamma>0$ and for all $a\in(\RR_+)^d$
there exists a hyperrectangle $Q_{\gamma,a}$ centred at some point $x_{\gamma,a}\in\Omega_L$ 
with sides of length $a_1,\ldots, a_d$ such that  
\[
\abs{\omega\cap Q_{\gamma,a}}<\gamma\abs{Q_{\gamma,a}}.
\]
Let now $n\in\NN$ and choose $\gamma=1/n^2$ and $a=(2\pi L,\ldots, 2\pi L, n)$ to obtain a 
sequence of hyperrectangles $Q_n\subset \Omega_L$ centred at some point $x_n$ such that 
\begin{equation}\label{eq:cubes}
\abs{\omega\cap Q_n}<(2\pi L)^{d-1} n^{-1}.
\end{equation}  
Due to the choice of the parameter $a$ and the fact that $Q_n\subset\Omega_L$, we have $x_n=(\pi L,\ldots, \pi L,x_{n,d})$ for some $x_{n,d}\in\RR$. 

We first treat the case of Dirichlet boundary conditions and we aim at constructing a sequence of functions 
which does not satisfy the observability estimate \eqref{eq:observability-def}. We consider the initial data $g_n(x)= K_{\Omega_L}(1,x,x_n)$, so that 
$g_n(t,x)=e^{t\Delta}g_n(x)=K_{\Omega_L}(1+t,x,x_n)$ is solution to the adjoint system \eqref{eq:heat_equation-adjoint}. 

Let now $W$ be a $d$-dimensional cube in $\Omega_L$ with sides of length $\pi L$ and centred at $x_n$. 
For the heat kernel on $W$ and $\Omega_L$ the following estimate holds (see \cite[Thm. 2.1.4 and Thm. 2.1.6]{davies:89})
\begin{equation}\label{eq:min-max}
K_{\Omega_L}(t,x,x_n) \geq K_W(t,x,x_n)=\sum_{k\in\NN^d} e^{-t\eta_k}\psi_k(x)\psi_k(x_n)\qquad \forall \ t>0, \quad \forall \ x\in W,
\end{equation}
where 
\[
\eta_k=\frac{\norm{k}{2}^2}{ L^2}\quad\text{ and }\quad
\psi_k(x)=\left(\frac{2}{\pi L}\right)^{d/2}\prod_{j=1}^{d}\sin\left(\frac{ k_j}{ L}\left(x_j-x_{0,j}+\frac{\pi L}{2}\right)\right),\quad k\in\NN^d
\] 
are the eigenvalues and corresponding eigenfunctions of the Dirichlet Laplacian on $W$. Therefore, we obtain 
\begin{align*}
\int_{\Omega_L}\abs{g_n(T,x)}^2 \dd x & = \int_{\Omega_L}\abs{K_{\Omega_L}(1+T,x,x_n)}^2 \dd x \\
& \geq \int_W\abs{K_W(1+T,x,x_n)}^2\dd x \\
& \geq e^{-2(1+T)\eta_{(1,\ldots,1)}}\abs{\psi_{(1,\ldots, 1)}(x_n)}^2\\
& = \left(\frac{2}{\pi L}\right)^{d}\exp\left(-\frac{2(1+T)d}{ L^2}\right)>0,
\end{align*}
i.e. the left hand side of \eqref{eq:observability-def} is bounded from below by a positive constant for all $n\in\NN$.

We now show that the right hand side of \eqref{eq:observability-def} converges to zero as $n\to+\infty$. 
For this purpose we use the upper bound (see \cite[Cor. 3.2.8]{davies:89})
\[
K_{\Omega_L}(t,x,y)\leq \frac{c}{t^{d/2}}\exp\left(-\frac{\norm{x-y}{2}^2}{6t}\right)\qquad \forall \ t>0 \quad \forall \ x,y\in\Omega_L,
\]%\todo{clash of notation: pair $(x,y)\in\Omega_L$ vs $x,y$ any two points in $\Omega_L$}
for $c$ a positive constant. 

Then, using the change of variable $y=x-x_n$, the monotonicity of the exponential in $t$, and the estimate $e^{-x}\leq 1$ for $x\geq 0$, we calculate
\begin{align*}
\int_0^T \int_\omega \abs{g_n(t,x)}^2 \dd x\dd t & =\int_0^T \int_\omega \abs{K_{\Omega_L}(1+t,x,x_n)}^2 \dd x\dd t\\  
& \leq \int_0^T\int_\omega  \frac{c^2}{(t+1)^{d}}e^{-\frac{\norm{x-x_n}{2}^2}{3(t+1)}}\dd x \dd t\\
&  \leq \int_0^T\int_{\omega-x_n} c^2 e^{-\frac{\norm{y}{2}^2}{3(T+1)}}\dd y \dd t \\
&  \leq T\int_{(\omega-x_n)\cap (Q_n-x_n)} c^2 e^{-\frac{\norm{y}{2}^2}{3(T+1)}}\dd y \\
& \hspace{2.5cm}+T \int_{(\Omega_L-x_n)\setminus(Q_n-x_n) }  c^2 e^{-\frac{\norm{y}{2}^2}{3(T+1)}}\dd y \\
&  \leq T c^2\abs{\omega\cap Q_n}+ T \int_{(\Omega_L-x_n)\setminus(Q_n-x_n) } c^2 e^{-\frac{\norm{y}{2}^2}{3(T+1)}}\dd y.
\end{align*}
Since %$(\Omega_L-x_n)\setminus(Q_n- x_n)=\{y\in\RR^d\;\vert\; y_j\in (-\pi L, \pi L) \ \ j\in\{1,\ldots, d-1\}, \ \ \abs{y_d} \geq \frac{n}{2}\}$, 
$Q_n$ exhausts the whole of $\Omega_L$ for $n\to+\infty$, the second term in the last line tends to zero as $n$ goes to infinity, 
and so does the first term due to the choice of $Q_n$. This leads to the desired contradiction. 

\medskip 

We now turn to the case of Neumann boundary conditions. We treat this case with a strategy similar to the one already used. 
As before, we consider the initial value $g_n(x)= K_{\Omega_L}(1,x,x_n)$ so that $g_n(t,x)=e^{t\Delta} g_n(x)=K_{\Omega_L}(1+t,x,x_n)$ 
solves the adjoint system \eqref{eq:heat_equation-adjoint}. 
In order to obtain a contradiction argument we use the following upper and lower Gaussian bounds 
\begin{equation}\label{eq:gaussian-bound-neumann} 
\frac{C_2}{c(d) t^{d/2}}e^{-c_2\frac{\norm{x-y}{2}^2}{t}} \leq K_{\Omega_L}(t,x,y) \leq \frac{C_1}{c(d) t^{d/2}}e^{-c_1\frac{\norm{x-y}{2}^2}{t}}
\end{equation}
valid for some positive constants $c_1, c_2, C_1, C_2$, all $t>0$, all $x,y\in\Omega_L$ (see for example \cite{grygorian:92, li-yau:86}). 
Here $c(d)$ stands for the volume of the Euclidean unit ball centred at zero.

%As before, we consider  and we show that the observability inequality cannot be satisfied for the choice made. 
Using the lower bound in \eqref{eq:gaussian-bound-neumann}, the estimate $\abs{x_j -\pi L}^2\leq \pi^2 L^2$ for the first $d-1$ coordinates, 
and the change of variable $y=x_d-x_{n,d}$, we obtain
\begin{align*} 
\int_{\Omega_L}\abs{g_n(T,x)}^2 \dd x & =\int_{\Omega_L} \abs{K_{\Omega_L}(1+T, x, x_n)}^2 \dd x\\
& \geq \frac{C_2^2}{c(d)^2(1+T)^d}\int_{\Omega_L} e^{-2 c_2\frac{\norm{x-x_n}{2}^2}{(1+T)}}\dd x\\
& = \frac{C_2^2}{c(d)^2(1+T)^d}\int_{\TT^{d-1}_L}\int_{\RR} \prod_{j=1}^d e^{-\frac{2c_2 \abs{x_j-x_{n,j}}^2}{(1+T)}} \dd x_d \dd\;(x_1\ldots x_{d-1})\\
& \geq \frac{C_2^2(2\pi L)^{d-1}}{c(d)^2(1+T)^d}\exp\left(\frac{-2c_2(d-1)\pi^2 L^2}{(1+T)}\right)\int_{\RR} e^{\frac{-2c_2\abs{y}^2}{(1+T)}}\dd y,
\end{align*}
i.e. the left hand side of \eqref{eq:observability-def} is bounded away from zero by a constant independent of $n\in\NN$. 

For the right hand side of \eqref{eq:observability-def}, using similar steps as for the Dirichlet case, the upper bound in \eqref{eq:gaussian-bound-neumann}, 
and the estimate $c(d)(1+t)^{1/2}\geq c(d)$ for $t\geq 0$ we have 
\begin{align*} 
\int_0^T \int_\omega \abs{g_n(t,x)}^2 \dd x\dd t & = \int_{0}^T\int_{\omega} \abs{K_{\Omega_L}(1+t, x, x_n)}^2 \dd x \dd t\\ 
& \leq \int_0^T \int_\omega \frac{C_1^2}{c(d)^2 t^{d}}e^{-2c_1\frac{\norm{x-y}{2}^2}{t}} \dd x \dd t \\
& \leq \frac{T C_1^2}{c(d)^2}\left(\abs{\omega\cap Q_n} +\int_{(\Omega_L-x_n)\setminus(Q_n- x_n)} e^{-\frac{2c_1\norm{y}{2}^2}{(1+T)}} \dd x\dd t\right),
\end{align*}
which goes to zero as $n$ goes to infinity and leads to contradiction.

%\hspace{14.5cm}$\qed$ 

%\end{proof} 
\bigskip

To conclude this section, we show that condition \eqref{eq:necessary-miller} is equivalent to thickness with respect to $\Omega_L$.

\begin{lemma}\label{lemma:equivalence}
Let $\omega\subset\Omega_L$ be a measurable set with $\abs{\omega}>0$. Then,
%\begin{itemize}
%\item [(i)] 
$\omega$ is not thick with respect to $\Omega_L$ if and only if
%\item [(ii)] 
there exist a sequence of points $(y_n)_{n\in\NN}$ in $\Omega_L$, a time $T>0$ and a constant $\kappa>1$ such that 
\begin{equation}\label{eq:necessary-miller-1}
-2T\log\left(\int_\omega \exp\left( -\frac{\norm{x-y_n}{2}^2}{2T}\right) \dd x \right) 
      -\kappa \frac{\pi^2 d^2}{4}\left( \frac{T}{d_b(y_n,\partial\Omega_L)} \right)^2 \underset{n\to\infty}{\longrightarrow} + \infty,
\end{equation}
where $d_b(y_n,\partial\Omega_L)=\min \left( \dist(y_n, \partial\Omega_L), \frac{T\pi^2d}{4}\right)$.
%\end{itemize}
%condition \eqref{eq:necessity} holds if and only if condition \eqref{eq:necessary-miller} is not satisfied.
\end{lemma}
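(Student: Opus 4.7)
The plan is to prove both directions of the equivalence via the parallelepiped-based characterisation of thickness with respect to $\Omega_L$ that was exploited in Section \ref{sec:necessity}, combined with direct Gaussian integral estimates. The proof will be self-contained modulo the construction of bad parallelepipeds $Q_n$ already done in the proof of Theorem \ref{thm:necessity}.

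\textbf{Forward direction (not thick $\Rightarrow$ \eqref{eq:necessary-miller-1}):} If $\omega$ is not thick with respect to $\Omega_L$, I reuse the sequence of parallelepipeds constructed in the proof of Theorem \ref{thm:necessity}: for each $n$, a parallelepiped $Q_n\subset\Omega_L$ with side lengths $(2\pi L,\ldots,2\pi L,n)$ centred at $y_n=(\pi L,\ldots,\pi L,y_{n,d})$ with
\[
\abs{\omega\cap Q_n}<(2\pi L)^{d-1} n^{-1}.
\]
Fix any $T>0$ and $\kappa>1$ and split
\[
\int_\omega e^{-\norm{x-y_n}{2}^2/(2T)}\dd x=\int_{\omega\cap Q_n}e^{-\norm{x-y_n}{2}^2/(2T)}\dd x+\int_{\omega\setminus Q_n}e^{-\norm{x-y_n}{2}^2/(2T)}\dd x.
\]
The first term is bounded by $\abs{\omega\cap Q_n}\leq (2\pi L)^{d-1}/n$. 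For the second, observe that since $Q_n$ fills the entire cross-section $\TT^{d-1}_L$, any $x\in\Omega_L\setminus Q_n$ must satisfy $\abs{x_d-y_{n,d}}>n/2$; by Fubini, that term is dominated by $(2\pi L)^{d-1}\cdot 2\int_{n/2}^\infty e^{-r^2/(2T)}\dd r$, which decays superexponentially. Hence $-2T\log(\,\cdot\,)\to+\infty$. Since $\dist(y_n,\partial\Omega_L)=\pi L$ for all $n$, the quantity $d_b(y_n,\partial\Omega_L)=\min(\pi L,T\pi^2 d/4)$ is constant in $n$, so the subtracted term in \eqref{eq:necessary-miller-1} is bounded, and the whole expression diverges to $+\infty$.

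\textbf{Reverse direction (thick $\Rightarrow$ \eqref{eq:necessary-miller-1} fails):} Assume $\omega$ is $(\gamma,a)$-thick with respect to $\Omega_L$, and fix any sequence $(y_n)\subset\Omega_L$, any $T>0$, any $\kappa>1$. Since $a_j\leq 2\pi L$ for $j\leq d-1$, I can slide a box of sides $a_1,\ldots,a_d$ along each axis and place a parallelepiped $P_n\subset\Omega_L$ containing $y_n$: for $j\leq d-1$ the interval $[\max(0,y_{n,j}-a_j),\min(2\pi L-a_j,y_{n,j})]$ is non-empty and provides a valid lower corner, while the $x_d$-direction is unrestricted. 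Thickness gives $\abs{\omega\cap P_n}\geq\gamma\abs{P_n}$, and on $P_n$ one has $\norm{x-y_n}{2}^2\leq\norm{a}{2}^2$, so
\[
\int_\omega e^{-\norm{x-y_n}{2}^2/(2T)}\dd x\geq\gamma\Bigl(\prod_{j=1}^d a_j\Bigr)\exp\bigl(-\tfrac{\norm{a}{2}^2}{2T}\bigr)=:c(T,\gamma,a)>0.
\]
Thus $-2T\log(\,\cdot\,)\leq-2T\log c(T,\gamma,a)$ is bounded above uniformly in $n$. Since the second summand $\kappa\pi^2 d^2 T^2/(4 d_b(y_n,\partial\Omega_L)^2)$ is nonnegative, the left-hand side of \eqref{eq:necessary-miller-1} stays bounded above, hence cannot tend to $+\infty$ along any choice of $(y_n),T,\kappa$.

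\textbf{Main obstacle.} The Gaussian tail bounds are routine. The delicate points are two. First, in the forward direction, it matters that the chosen $Q_n$ exhausts the whole cross-section $\TT^{d-1}_L$, so that the escape from $Q_n$ can only happen in the $x_d$-direction and is therefore Gaussian-small; this dictates the shape $(2\pi L,\ldots,2\pi L,n)$ and fixes $y_n$ to the centre $(\pi L,\ldots,\pi L,y_{n,d})$, which conveniently is at constant distance from $\partial\Omega_L$. Second, in the reverse direction, the key is that a qualifying parallelepiped can always be placed to contain $y_n$, even when $y_n$ is very close to $\partial\Omega_L$; this is precisely where the implicit restriction $a_j\leq 2\pi L$ for $j\leq d-1$ from the definition of thickness with respect to $\Omega_L$ is used essentially.
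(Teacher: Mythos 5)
Your proof is correct and follows essentially the same strategy as the paper's: in the forward direction you reuse the sequence of parallelepipeds $Q_n$ (sides $(2\pi L,\ldots,2\pi L,n)$, centre at cross-sectional midpoint) from the proof of Theorem \ref{thm:necessity}, split the Gaussian integral over $\omega\cap Q_n$ and $\Omega_L\setminus Q_n$, and exploit the constant distance $\pi L$ to the boundary; in the reverse direction you lower-bound the Gaussian integral over a thick parallelepiped $P_n\ni y_n$ using $(\gamma,a)$-thickness and the diagonal bound. The only cosmetic difference is that you spell out the Fubini/Gaussian-tail estimate for $\int_{\Omega_L\setminus Q_n}$ which the paper only sketches, but the argument is identical in substance.
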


\begin{proof}

We first assume that $\omega$ is not thick and consider the sequence of hyperrectangles $Q_n$ chosen as in \eqref{eq:cubes} with centre $x_n\in\Omega_L$. 
We show that the sequence $(x_n)_{n\in\NN}$ satisfies condition \eqref{eq:necessary-miller-1} for all $T>0$ and all $\kappa>1$. 
By monotonicity of the exponential, the change of variable $y= x-x_n$ and the fact that $e^{-x}\leq 1$ for all $x>0$, we have 
\begin{align*} 
\int_\omega \exp\left( -\frac{\norm{x-x_n}{2}^2}{2T}\right) \dd x & \leq \int_{(\omega\cap Q_n)- x_n} \exp\left(-\frac{\norm{y}{2}^2}{2T}\right) \dd y \\
& \hspace{2cm}+ \int_{(\Omega_L-x_n)\setminus(Q_n- x_n)} \exp\left(-\frac{\norm{y}{2}^2}{2T}\right) \dd y \\
& \leq \abs{\omega\cap Q_n}+ \int_{(\Omega_L-x_n)\setminus(Q_n- x_n)} \exp\left(-\frac{\norm{y}{2}^2}{2T}\right) \dd y,
\end{align*}
which tends to zero as $n\to\infty$. Therefore
\[
-2T\log \left( \int_\omega \exp\left( -\frac{\norm{x-y_n}{2}^2}{2T}\right) \dd x \right)\underset{n\to\infty}{\longrightarrow} +\infty.
\]
Since $\dist(x_n,\partial\Omega_L)= \pi L$ for all $n\in\NN$, the second summand in \eqref{eq:necessary-miller-1} is only a constant. 
Hence, $(x_n)_{n\in\NN}$ satisfies \eqref{eq:necessary-miller-1}.  

To prove the converse implication we assume that $\omega$ is thick with respect to $\Omega_L$ with parameters $\gamma>0$ and $a\in(\RR_+)^d$ 
and show that \eqref{eq:necessary-miller-1} does not hold. 
Let therefore $(y_n)_{n\in\NN}$ be any sequence of points in $\Omega_L$, $T>0$, $\kappa>1$, and $(P_n)_{n\in\NN}$ be a sequence of hyperrectangles 
with sides of length $a_1,\ldots, a_d$ and such that $y_n\in P_n$. 
Then, $\abs{\omega\cap P_n}\geq \gamma\abs{P_n}>0$ for all $n\in\NN$. 
Let $D=D(a,d)$ the length of the main diagonal of $P_n$. Using the monotonicity of the exponential we have
\begin{align*}
\int_\omega \exp\left( -\frac{\norm{x-y_n}{2}^2}{2T}\right)&  \dd x  \geq \int_{\omega\cap P_n} \exp\left( -\frac{\norm{x-y_n}{2}^2}{2T}\right) \dd x \\
& \geq \exp\left(-\frac{D^2}{2T}\right)\abs{\omega\cap P_n} \geq \exp\left(-\frac{D^2}{2T}\right)\gamma\prod_{j=1}^d a_j >0.
\end{align*}
Consequently, the first summand in \eqref{eq:necessary-miller-1} is bounded from above by a constant for all $n\in\NN$. 
Since the second summand is non-positive for all $n\in\NN$, the sum cannot diverge to $+\infty$. 
\end{proof}

%----------------------------------------------------------------------------
% Bibliography
%----------------------------------------------------------------------------
\bibliography{heat-eq-bib} 
\bibliographystyle{plain}

\end{document}